\theoremstyle{plain}
\newtheorem{theorem}{Theorem}[section]
\newtheorem{lemma}[theorem]{Lemma}
\newtheorem{corollary}[theorem]{Corollary}
\newtheorem{proposition}[theorem]{Proposition}
\theoremstyle{definition}
\newtheorem*{properties*}{Properties}
\newenvironment{definition*}[1][Definition]{\begin{trivlist}
\item[\hskip \labelsep {\bfseries #1}]}{\end{trivlist}}
\numberwithin{equation}{section}
\newcommand{\R}{\mathcal{R}}
\DeclareMathOperator{\maj}{maj}
\DeclareMathOperator{\inv}{inv}
\DeclareMathOperator{\des}{des}
\DeclareMathOperator{\Des}{Des}
\DeclareMathOperator{\sor}{sor}
\DeclareMathOperator{\bcode}{B-code}
\DeclareMathOperator{\maxi}{i}
\DeclareMathOperator{\maxs}{s}
\newcommand{\majup}{\maj'_U}
\newcommand{\invup}{\inv'_{U}}
\newcommand{\desup}{\des'_{U}}
\newcommand{\Desup}{\Des'_{U}}
\newcommand{\sorup}{\sor'_{U}}
\newcommand{\al}{\boldsymbol{\alpha}}
\newcommand{\Ra}{\mathcal{R}(\al)}
\begin{document}
\title{Graphical Mahonian Statistics on Words}
\author{
Amy Grady and Svetlana Poznanovi\'c$^1$  \\ [6pt]
Department of Mathematical Sciences\\
Clemson University, Clemson, SC 29634  \\
}
\date{}
\maketitle
\begin{abstract} 
Foata and Zeilberger defined the graphical major index, $\majup$, and the graphical inversion index, $\invup$, for words.  These statistics are a generalization of the classical permutation statistics $\maj$ and $\inv$ indexed by directed graphs $U$.  They showed that $\majup$ and $\invup$ are equidistributed over all rearrangement classes if and only if  $U$ is bipartitional. In this paper we strengthen their result by showing that if $\majup$ and $\invup$ are equidistributed on a single rearrangement class then $U$ is essentially bipartitional. Moreover, we define a graphical sorting index, $\sorup$, which generalizes the sorting index of a permutation. We then characterize the graphs $U$ for which $\sorup$ is equidistributed with $\invup$ and $\majup$ on a single rearrangement class.

\end{abstract}

{\renewcommand{\thefootnote}{} \footnote{\emph{E-mail addresses}:
agrady@clemson.edu (A. Grady), spoznan@clemson.edu (S.~Poznanovi\'c)}}

\footnotetext[1]{The second author is partially supported by the NSF grant DMS-1312817.}


\section{Introduction} \label{S:introduction}

Let $\al = (\alpha_1, \alpha_2, \ldots, \alpha_n)$ be a sequence of nonnegative integers. We will denote by $\R(\al)$ the set of permutations of the multiset $\{1^{\alpha_{1}}, 2^{\alpha_{2}}, \ldots, n^{\alpha_{n}} \}$, i.e., $\R(\al)$ is the set of all words containing $\alpha_i$ occurrences of the letter $i$ for all $i=1,2,\ldots,n$. For $w = x_1 x_2 \ldots x_m \in \R(\al)$, the \emph{inversion number} is defined as 
\[ \inv w = \sum _{1\leq i < j \leq m} \mathcal{X} (x_i > x_j),\]
and the \emph{major index} is defined as
\[  \maj w = \sum _{i=1} ^{m-1} i\mathcal{X} (x_i > x_{i+1} ).\] The set of all positions $i$ such that $x_i > x_{i+1}$ is known as the \emph{descent set} of $w$, $\Des w$, and its cardinality is denoted by $\des w$. So, $\maj w = \sum_{i \in \Des w} i$. 

The generating function for permutations by number of inversions goes back to Rodriguez~\cite{rodriguez1839note} and the generalization to multisets is due to MacMahon~\cite{macmahon1917two}. MacMahon also showed~\cite{macmahon1913indices, macmahon1984combinatory} that $\maj$ and $\inv$ are equidistributed on $\R(\al)$. Namely,
\[  \sum _{w \in \R(\al)} q^{\inv w} = \sum _{w \in \R(\al)} q^{\maj w} =\left[\begin{array}{c}
\alpha_1 + \alpha_2+ \cdots+ \alpha_n \\
\alpha_1, \alpha_2, \ldots, \alpha_n \\
\end{array}\right] \] 
where 

\[ \left[\begin{array}{c}
\alpha_1 + \alpha_2+ \ldots+ \alpha_k \\
\alpha_1, \alpha_2, \ldots, \alpha_k \\
\end{array}\right] = \frac {[\alpha_1 + \alpha_2+ \cdots+ \alpha_k ]!}{[\alpha_1]! [\alpha_2]! \dots [\alpha_k]!}\]
is the $q$-analog of the multinomial coefficient and $[n]! =  (1 + q) (1 + q + q^2) \cdots (1+ q + q^2 + \dots + q^{n-1})$ is the $q$-factorial.

In honor of MacMahon, all permutation statistics that share the same distribution are called Mahonian. These two classical Mahonian statistics have been generalized in various ways. Some examples are Kadell's weighted inversion number~\cite{kadell1985weighted}, the $r$-major index introduced by Rawlings~\cite{rawlings1981r},  the statistics introduced by Clarke~\cite{clarke2005note}, and the maj-inv statistics of Kasraoui~\cite{kasraoui2009classification}. The generalization that we will be considering in this paper is due to Foata and Zeilberger~\cite{foata1996graphical}. They defined graphical  statistics (graphical inversions and graphical major index) parameterized by a general directed graph $U$ and they described the graphs $U$ for which these statistics are equidistributed on \emph{all} rearrangement classes.

\begin{theorem}[\cite{foata1996graphical}] \label{fzthm}
The statistics $\invup$ and $\majup$ are equidistributed on each rearrangement class $\R(\al)$ if and only if the relation $U$ is bipartitional.
\end{theorem}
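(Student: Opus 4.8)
The plan is to prove the two implications separately, with essentially all of the work in the ``only if'' direction. Throughout I use that a relation $U$ on $\{1,\dots,n\}$ is \emph{bipartitional} precisely when there is an ordered partition $(B_1,\dots,B_k)$ of $\{1,\dots,n\}$ together with a set $S\subseteq\{1,\dots,k\}$ such that $a\,U\,b$ holds exactly when either the block containing $a$ precedes the block containing $b$, or else $a$ and $b$ lie in a common block $B_i$ with $i\in S$; and that $\invup w=\sum_{1\le p<q\le|w|}\mathcal{X}((x_p,x_q)\in U)$ while $\majup w=\sum_{p\,:\,(x_p,x_{p+1})\in U}p$.

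($\Leftarrow$) Suppose $U$ is bipartitional with data $(B_1,\dots,B_k)$ and $S$, and set $m_i=\sum_{j\in B_i}\alpha_j$ and $m=\sum_i m_i$. I would first observe that both statistics essentially only see the \emph{block word} $\overline w\in\mathcal{R}(m_1,\dots,m_k)$ obtained from $w$ by replacing each letter by the index of its block: $\majup w$ is a function of $\overline w$ alone, since whether $(x_p,x_{p+1})\in U$ depends only on the blocks of $x_p$ and $x_{p+1}$, and $\invup w=f(\overline w)+\sum_{i\in S}\binom{m_i}{2}$, where $f(\overline w)$ counts the pairs of positions whose blocks appear in strictly increasing order and the constant accounts for the pairs of positions lying in a common underlined block. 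Since the fibre of $w\mapsto\overline w$ has the constant size $\prod_i m_i!/\prod_{j\in B_i}\alpha_j!$, it suffices to prove that $f$ and the induced ``mixed'' major index --- which sums the positions $p$ for which $\overline w_p<\overline w_{p+1}$ together with those $p$ for which $\overline w_p=\overline w_{p+1}$ lies in an underlined block --- are equidistributed on $\mathcal{R}(m_1,\dots,m_k)$. For $S=\varnothing$ this is MacMahon's equidistribution of $\inv$ and $\maj$ up to reversing the alphabet order, proved by Foata's second fundamental transformation; for general $S$ I would adapt that transformation (or a Rawlings/Clarke-type variant) so that underlined blocks are handled on the ``descent'' side of each comparison, or else verify by a one-letter insertion argument that both statistics have generating function $q^{\sum_{i\in S}\binom{m_i}{2}}\left[\begin{array}{c}m\\m_1,\dots,m_k\end{array}\right]$ on $\mathcal{R}(m_1,\dots,m_k)$.

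($\Rightarrow$) Now suppose $\invup$ and $\majup$ are equidistributed on every $\Ra$. The key reduction is that when $\al$ is supported on a subset $A\subseteq\{1,\dots,n\}$, every $w\in\Ra$ uses only letters of $A$, so that both $\invup w$ and $\majup w$ depend only on the restricted relation $U|_A$; hence $\inv'_{U|_A}$ and $\maj'_{U|_A}$ are equidistributed on all rearrangement classes over the alphabet $A$. Invoking the characterization of bipartitional relations by forbidden subconfigurations on at most three letters, it is therefore enough to show that $U|_A$ is bipartitional whenever $|A|\le 3$. For $|A|=2$ one lists the sixteen relations and checks that exactly ten are bipartitional; for each of the remaining six I would exhibit a small multiplicity vector on $A$ whose rearrangement class separates the two statistics --- for instance, if $a\,U\,b$ and $b\,U\,a$ but neither loop is present, then on the rearrangement class of the multiset $\{a,b,b\}$ one computes $\invup\equiv 2$ while $\majup$ takes the values $1,2,3$. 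For $|A|=3$ one needs only the non-bipartitional relations every two-element restriction of which is already bipartitional, and for each such minimal obstruction a rearrangement class on three letters with small multiplicities will distinguish $\invup$ from $\majup$. Assembling these computations yields the contrapositive: a non-bipartitional $U$ carries an obstruction on some two- or three-element subset, and the corresponding rearrangement class then witnesses unequal distributions.

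I expect the main obstacle to be the ``only if'' direction: correctly identifying the finite list of minimal two- and three-element obstructions, and organizing the casework so that each obstruction is shown to break equidistribution on an explicit small rearrangement class. A secondary difficulty lies in the ``if'' direction, in incorporating underlined blocks into the major-index bijection, since inserting a new letter shifts all later positions and one must check that the resulting change in the mixed major index still ranges over an interval depending only on $\al$.
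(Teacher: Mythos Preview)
The paper does not actually give its own proof of this theorem: it is stated as a result of Foata and Zeilberger and cited from~\cite{foata1996graphical}, with the ``if'' direction recorded via the generating-function formula~\eqref{fzgf}. What the paper proves is the strictly stronger Theorem~\ref{thm: main inv}, whose ``only if'' direction specializes to that of Theorem~\ref{fzthm} by taking an $\al$ with all multiplicities at least~$2$ (so that ``essentially bipartitional relative to~$\al$'' collapses to ``bipartitional'').

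That said, your outline is a legitimate route to Theorem~\ref{fzthm} and is genuinely different from the paper's method for Theorem~\ref{thm: main inv}. Your plan for the ``only if'' direction is to localize: since equidistribution on every $\Ra$ restricts to equidistribution of $\inv'_{U|_A}$ and $\maj'_{U|_A}$ on every class over any subalphabet $A$, and since (by Han's characterization of bipartitional relations as those $U$ with both $U$ and $U^c$ transitive) bipartitionality is detected on subsets of size at most~$3$, a finite case check suffices. The paper, by contrast, never reduces to small alphabets; it works on a \emph{single} fixed $\Ra$ and extracts structural information from the equality $\max\majup=\max\invup$ via maximal descending chains in the multigraph $(\al,U)$ (Lemmas~\ref{lem: inv < maj}--\ref{lem: block structure}), eventually peeling off a bottom block. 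Your approach is more elementary and quickly recovers Foata--Zeilberger, but it cannot reach the paper's single-class strengthening: once only one $\al$ is available you lose the freedom to restrict to arbitrary subalphabets, which is exactly why the paper needs its chain machinery.

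Two points to tighten. First, your reduction to $|A|\le 3$ is correct but should be justified by an explicit appeal to Han's theorem (or a direct proof that transitivity of $U$ and $U^c$ is equivalent to bipartitionality), rather than asserted. Second, the three-letter casework is substantial: even after discarding relations with a non-bipartitional two-letter restriction, there remain many isomorphism types to treat, and for each you must exhibit a separating $\al$. Your two-letter example is fine and your acknowledgment of this obstacle is apt, but the proposal as written stops short of carrying it out.
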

A similar result was proved in~\cite{foata1995graphical}, where the definition of graphical inversions and major index is modified to allow different behavior of the letters at the end of the word. 

Here we do two different things. First, we strengthen Foata and Zeilberger's result by showing that the equidistribution of $\invup$ and $\majup$ on a \emph{single} rearrangement class $\R(\al)$ implies that $U$ is essentially bipartitional (Theorem~\ref{thm: main inv}). Second, we define a graphical sorting index on words, a statistics which generalizes the sorting index for permutations~\cite{petersen2011sorting}. We then describe the directed graphs $U$ for which $\sorup$ is equidistributed with $\invup$ and $\majup$ on a fixed class $\R(\al)$ (Theorem~\ref{thm: main sor}). 

In the next section we define the terminology we need and state the main results. Then we prove Theorem~\ref{thm: main inv} and Theorem~\ref{thm: main sor} in Section~\ref{sec: inv and maj} and Section~\ref{sec: sor}, respectively.


\section{Preliminaries and Main Results} 

A {\it directed graph} on $X=\{1,2, \ldots, n\}$ is any subset $U$ of the Cartesian product $X \times X$.  For each such directed graph $U$, we have the following statistics defined on each word $w = x_1 x_2 \ldots x_m$ with letters from $X$:
\begin{align*}
\invup w &= \sum_{1 \leq i < j \leq m} \mathcal{X} ((x_i, x_j) \in U),\\
\Desup w &=\{i \colon 1 \leq i \leq m,  (x_i, x_{i+1}) \in U\},\\
\desup w &=|\Desup|,\\
\majup w &= \sum_{i \in \Desup w} i. 
\end{align*}
Since $U$ is also a relation on $X$, for convenience, in some places we will use the notation $x>_{U}y$ to represent the edge $(x,y) \in U$. We will say $x$ is related to $y$ if $(x,y) \in U$ or $(y,x) \in U$. 

An {\it ordered bipartition} of $X$ is a sequence $(B_1, B_2, \ldots, B_k)$ of nonempty disjoint subsets of $X$ such that $B_1 \cup B_2 \cup \dots \cup B_k = X$, together with a sequence $(\beta_1, \beta_2, \dots, \beta_k)$ of elements equal to 0 or 1.  If $\beta_i = 0$ we say the subset $B_i$ is {\it non-underlined}, and if $\beta_i = 1$ we say the subset $B_i$ is {\it underlined}.

A relation $U$ on $X \times X$ is said to be {\it bipartitional}, if there exists an ordered bipartition $((B_1, B_2, \ldots, B_k), (\beta_1, \beta_2, \ldots, \beta_k))$ such that $(x,y) \in U$ if and only if either $x \in B_i$, $y\in B_j$ and $i<j$, or $x$ and $y$ belong to the same underlined block $B_i$. Bipartitional relations were introduced in~\cite{foata1996graphical} as an answer to the question ``When are $\invup$ and $\majup$ equidistributed over all rearrangement classes?''. In particular, there the authors showed that if $U$ is bipartitional with blocks $((B_{1}, \ldots, B_{k}), (\beta_{1},\ldots, \beta_{k}))$ then
\begin{equation}\label{fzgf} \sum_{w \in \R(\al)} q^{\invup w} = \sum_{w \in \R(\al)} q^{\majup w} =  {|\alpha| \brack m_1,\ldots, m_k} \prod_{j=1}^k {m_l \choose \alpha(B_l)}q^{\beta_{j} \binom{m_{j}}{2}}.\end{equation} Here and later we use the notation
\begin{align*} |\al| &= \alpha_{1}+ \cdots + \alpha_{n},\\
m_{i} &= |B_{i}|,\\
\al(B_{i}) &= (\alpha_{i_{1}}, \ldots, \alpha_{i_{l}}) \text{ if } B_{i} = \{i_{1} < \cdots < i_{l}\}.\end{align*}
Han \cite{han1995ordres} showed that bipartitional relations $U$ can also be characterized as relations $U$ for which both $U$ and its complement are transitive. Hetyei and Krattenthaler~\cite{hetyei2011poset} showed that the poset of bipartitional relations ordered by inclusions has nice combinatorial properties.

In this paper we will be considering the distribution of $\invup$ and $\majup$ over a fixed rearrangement class $\R(\al)$. Notice that if the multiplicity $\alpha_{x}$ of $x \in X$ is 1, then the pair $(x,x)$ cannot contribute to neither $\invup$ nor $\majup$. Therefore, omitting or adding such pairs to $U$ doesn't change these two statistics over $\R(\al)$. For that purpose, we define $U$ to be \emph{essentially bipartitional relative to $\al$} if there are disjoint sets $I \subseteq X$ and $J \subseteq X$ such that 
\begin{enumerate}[(1)]
\item $\alpha_{x}=1$ for all $x \in I \cup J$ and 
\item $(U \setminus \{(x,x) \colon x \in I\}) \cup  \{(x,x) \colon x \in J\}$ is bipartitional.
\end{enumerate}

\begin{theorem} \label{thm: main inv}
The statistics $\invup$ and $\majup$ are \emph{equi\textbf{di}stributed} over $\R(\al)$ if and only if the relation $U$ is essentially bipartitional relative to $\alpha$.
\end{theorem}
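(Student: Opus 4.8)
The plan is to prove the two implications separately; the forward (``if'') direction is a short reduction to Theorem~\ref{fzthm}, while the converse carries the real content.

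\emph{Proof that the condition is sufficient.} Suppose $U$ is essentially bipartitional relative to $\al$, witnessed by disjoint sets $I,J$ of multiplicity-one letters, and set $U' = (U\setminus\{(x,x)\colon x\in I\})\cup\{(x,x)\colon x\in J\}$, which is bipartitional by definition. The relations $U$ and $U'$ differ only in loops $(x,x)$ with $\alpha_x=1$; such a letter $x$ occurs exactly once in any $w\in\R(\al)$, so no pair of positions $i<j$ of $w$ --- in particular no adjacent pair --- has both entries equal to $x$, and hence neither $\invup w$ nor $\majup w$ is affected by the presence or absence of $(x,x)$ in the relation. Therefore $\invup$ and $\majup$ coincide with $\invprime$ and $\majprime$ as functions on $\R(\al)$, and Theorem~\ref{fzthm} applied to the bipartitional relation $U'$ shows that $\invprime$ and $\majprime$ --- hence $\invup$ and $\majup$ --- are equidistributed on $\R(\al)$.

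\emph{Proof that the condition is necessary.} I would argue the contrapositive: assuming $U$ is not essentially bipartitional relative to $\al$, exhibit a power of $q$ whose coefficients in $\sum_{w\in\R(\al)}q^{\invup w}$ and in $\sum_{w\in\R(\al)}q^{\majup w}$ disagree. The first step is to localize the failure. By Han's characterization, $U$ is essentially bipartitional relative to $\al$ precisely when the loops at the multiplicity-one letters can be switched on and off so that the resulting relation and its complement are both transitive. A transitivity or co-transitivity violation involves at most three letters --- or two letters together with a loop --- and a short consistency argument shows that if the restriction of $U$ to every subset of at most three letters admits such a repair, then so does $U$. Hence there is a set $S\subseteq X$ with $|S|\le 3$ on which $U|_S$, with the loops at its multiplicity-one letters left free and the other loops fixed, cannot be completed to a bipartitional relation on $S$. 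Enumerating these minimal obstructions yields a short list: the two kinds of three-letter transitivity/co-transitivity violation (which are obstructions for any multiplicities), and a handful of two-letter patterns --- such as $(a,b),(b,a)\in U$ with $(a,a)\notin U$, or $(a,b),(b,a)\notin U$ with $(a,a)\in U$ --- that become obstructions exactly when the offending loop sits on a letter of multiplicity at least $2$.

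The remaining and principal step is to show that the presence of any one of these obstructions on a set $S$ already forces $\invup$ and $\majup$ to differ in distribution on the \emph{whole} class $\R(\al)$. Restricted to the sub-class built from $S$ alone this is a short finite check; for instance, with $X=\{1,2\}$, $\al=(2,1)$ and $U=\{(1,2),(2,1)\}$ one finds $\sum_{w}q^{\invup w}=3q^2$ but $\sum_{w}q^{\majup w}=q+q^2+q^3$. So the crux --- and the step I expect to be the main obstacle --- is passing from the sub-alphabet $S$ back to all of $X$: unlike $\invup$, the statistic $\majup$ does not split over sub-alphabets, so it is not automatic that equidistribution on $\R(\al)$ descends to $\R(\al|_S)$. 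I would attack this either (i) by proving a restriction lemma directly, analyzing the fibres of the map $\R(\al)\to\R(\al|_S)$ that deletes the letters outside $S$, possibly after an inductive reduction on $|X|$ that factors out a letter whose relationship to the others is already bipartitional-compatible, in the spirit of the product formula~\eqref{fzgf}; or (ii) by avoiding restriction entirely and, for each obstruction pattern, comparing one carefully chosen coefficient of the two generating polynomials over $\R(\al)$ --- for example the lowest coefficient, or one that isolates the words in which the letters of $S$ sit in a prescribed configuration while the ambient letters are placed so as to contribute the same amount to $\invup$ and to $\majup$, making that contribution cancel. Once the localization is in hand, the finite case analysis over the obstruction patterns finishes the proof.
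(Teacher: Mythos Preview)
Your ``if'' direction is correct and matches the paper's observation preceding the theorem.

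Your ``only if'' direction, however, is a plan rather than a proof, and the gap sits exactly where you flag it: the passage from a local obstruction on $S\subseteq X$ back to non-equidistribution on the full class $\R(\al)$. You propose either (i) a restriction lemma via an inductive reduction that ``factors out a letter whose relationship to the others is already bipartitional-compatible,'' or (ii) a coefficient comparison that neutralizes the ambient letters. But (i) is circular: to know that such a letter exists is precisely the structural content you are trying to establish --- it is the conclusion, not a tool. And (ii) is left entirely unspecified; since $\majup$ genuinely does not split over sub-alphabets, there is no evident coefficient that isolates the behaviour on $S$, and you give no candidate. (Your localization step, incidentally, is more delicate than ``a short consistency argument'': different three-element subsets could a priori demand conflicting loop choices at a multiplicity-one letter, and ruling this out already takes a small argument.)

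The paper does not attempt localization at all. Instead it works extremally: it introduces \emph{maximal chain words}, obtained by repeatedly stripping off longest $U$-descending chains, and shows (Lemma~\ref{lem: inv < maj}) that they witness $\max\majup\ge\max\invup$ on every $\R(\al)$. Equidistribution forces equality throughout the inductive inequality chain, and reading off where equality must hold yields a cascade of structural constraints on $U$ (Lemmas~\ref{lem:chain prop}--\ref{lem: underlined blocks 13}): elements in the same chain are pairwise related, the symmetric part of $U$ behaves like an equivalence relation, and eventually one locates a ``bottom'' block $B$ (or set $C$) with $B\times(X\setminus B)\cap U=\emptyset$ and $(X\setminus B)\times B\subseteq U$ (Lemmas~\ref{lem: smallest block}, \ref{lem: block structure}). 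Only then does the paper peel off that block via the product factorization you allude to and induct on $|X|$. In short, the paper \emph{earns} the inductive step that your approach (i) presupposes; the maximal-chain-word machinery is the missing idea.
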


In view of the comment preceding the theorem, the``if" part of Theorem \ref{thm: main inv}  follows from Theorem~\ref{fzthm}. We prove the ``only if" in Section \ref{sec: inv and maj}.

The third Mahonian statistic we will consider is the \emph{sorting index} introduced by Peterson \cite{petersen2011sorting} and also studied independently by Wilson~\cite{wilson2010interesting}. Every permutation $\sigma \in S_n$ can uniquely be  decomposed as a product of transpositions,  $\sigma = (i_1,j_1)(i_2,j_2) \cdots (i_k,j_k)$, such that $j_1<j_2<\cdots <j_k$ and $i_1<j_1, i_2<j_2,  \ldots , i_k<j_k$.  The sorting index is defined by \[\sor \sigma = \sum_{r=1}^k (j_r-i_r).\]  The desired transposition decomposition can be found using the Straight Selection Sort algorithm. The algorithm first places $n$ in the $n$-th position by applying a transposition, then places $n-1$ in the $(n-1)$-st position by applying a transposition, etc. For example, for  $\sigma = 2413576$, we have \[2413576 \overset{(67)} \rightarrow 2413567 \overset{(24)} \rightarrow 2314567 \overset{(23)} \rightarrow 2134567 \overset{(12)} \rightarrow 1234567 \] and, therefore, $\sor \sigma = (2-1)+(3-2)+(4-2)+(7-6) = 5$. 

The sorting index has been extended to labeled forests by the authors~\cite{grady2015sorting}. It can also be naturally extended to words $w \in \R(\al)$ by a generalization of Straight Selection Sort which reorders the letters into a weakly increasing sequence. At each step transpositions are applied to place all the $n$'s at the end, then all the $n-1$'s to the left of them, etc, so that for each $x \in X$,  the $\alpha_{x}$ copies of $x$ stay in the same relative order they were right before they were ``processed''. Then we define $\sor w$ to be the sum of the number of positions each element moved during the sorting. For example, applying this sorting  algorithm to $w= 143123123$ yields \begin{equation} \label{sorting} 143123123  \rightarrow 133123124 \rightarrow 123123134  \rightarrow 123121334 \rightarrow 121123334 \rightarrow 111223334 \end{equation} and thus $\sor w = 7 + 2 +3 +4 + 2 = 18$.

We define a graphical sorting index that depends on $U$ using the same sorting algorithm but at each step, when sorting $x$, we only count how many elements $y$ such that $(x,y) \in U$ it ``jumps over''. More formally, to compute $\sorup w$ for $w=x_{1}x_{2}\ldots x_{m}$: 
\begin{itemize}
\item Begin with $i=m$, and $\sorup w = 0$.
\item Consider the largest element in $w$ with respect to integer order.  If there is a tie, pick the element with the largest subscript, and call this element $x_j$.
\item Interchange $x_j$ with $x_i$. 
\item For each $h=j+1,j+2,\ldots,i$, if $(x_j, x_h) \in U$ increase $\sorup w$ by 1.
\item Repeat this process for $i = m-1, \ldots, 1$.
\end{itemize}

For example, consider the same word $w= 143123123$  with $U = \{ (4,3), (3,3), (3,1), (2,3), (1,1)\}.$  The sorting steps are given in~\eqref{sorting} and thus $\sorup w = 3 + 1 +1 +2 + 0= 7$. In particular, if $U$ is the natural integer order $>$ then $\sor w = \sorup w$. Our second main result is the following.

\begin{theorem}\label{thm: main sor} 
The statistics $\sorup$, $\invup$ and $\majup$ are equidistributed on a fixed rearrangement class $\R(\al)$ if and only if the relation $U$ has the following properties.
\begin{enumerate}
\item $U$ is bipartitional with no underlined blocks, 
\item If $(x, y) \in U$ then  $x > y$, 
\item All but the last block of $U$  are of size at most 2,
\item If $U$ has $k$ blocks $B_{1}, \ldots, B_{k}$ and $|B_{i}|=2$ for some $1 \leq i \leq k-1$ then $\alpha_{\max B_{i}}=1$.
\end{enumerate}
\end{theorem}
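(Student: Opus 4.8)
The plan is to prove both directions by computing and comparing the generating function $\sum_{w\in\R(\al)} q^{\sorup w}$ with the known generating function \eqref{fzgf} for $\invup$ and $\majup$.

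For the ``if'' direction, suppose $U$ satisfies properties (1)--(4). By Theorem~\ref{fzthm} and \eqref{fzgf}, since $U$ is bipartitional with no underlined blocks, all the $\beta_j$ vanish and
\[
\sum_{w\in\R(\al)} q^{\invup w} = \sum_{w\in\R(\al)} q^{\majup w} = {|\al|\brack m_1,\ldots,m_k}\prod_{l=1}^k {m_l\choose \al(B_l)}.
\]
Condition (2) guarantees that the bipartition blocks are intervals of consecutive integers arranged in decreasing order of block index (each block $B_i$ sits entirely above $B_{i+1}$ in integer order), which is exactly the structure the sorting algorithm respects: when we process the largest remaining letter $x_j$ and slide it to position $i$, the edges $(x_j, x_h)\in U$ it jumps over are precisely the letters $x_h$ lying in a block strictly above the block containing $x_j$ — or, if $x_j$ is in a size-2 non-terminal block, possibly also the other element of its own block. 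I would compute $\sum_w q^{\sorup w}$ by analyzing the sorting algorithm step by step, grouping the positions at which a fixed block's letters are placed; each such placement contributes a factor recording how many already-placed letters from higher blocks (and, for size-2 blocks, same-block letters) are jumped. Conditions (3) and (4) are exactly what is needed to make the resulting product telescope into ${|\al|\brack m_1,\ldots,m_k}\prod_l {m_l\choose \al(B_l)}$: with blocks of size $\le 2$ the ``jump count'' within a step ranges over a simple set, and the size-2 restriction in (4) with $\alpha_{\max B_i}=1$ forces the two letters of such a block to contribute a clean factor $(1+q)$ matching ${2\choose 1}_q$. I expect this computation to mirror a standard ``inserting letters one block at a time'' argument for the $q$-multinomial.

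For the ``only if'' direction, I would argue contrapositively: if $U$ fails any one of (1)--(4), exhibit a discrepancy between the $\sorup$ distribution and the $\invup$/$\majup$ distribution on $\R(\al)$. The cleanest route is to first observe that equidistribution with $\invup$ and $\majup$ already forces (via Theorem~\ref{thm: main inv}) that $U$ is essentially bipartitional relative to $\al$; then I must rule out underlined blocks, non-decreasing edges, large non-terminal blocks, and size-2 non-terminal blocks with multiplicity $>1$ on the top element. For underlined blocks: $\sorup$ never ``charges'' for swapping two equal letters placed consecutively at the current stage in a way that produces the $q^{\beta_j\binom{m_j}{2}}$ factor, so an underlined block makes $\max \sorup < \max \invup$, or more simply the generating functions differ in degree. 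For a non-decreasing edge $(x,y)\in U$ with $x<y$ (or $x=y$ with $\alpha_x>1$): when the algorithm processes $y$ first (since $y\ge x$), the letter $y$ never jumps over $x$ because we only count edges $(x_j,x_h)$ emanating from the letter being moved — so such edges are invisible to $\sorup$, whereas they do contribute to $\invup$; a short explicit word on two or three letters gives unequal polynomials. For violations of (3) or (4): a block $B_i$ of size $3$ (or size $2$ with $\alpha_{\max B_i}\ge 2$) that is not terminal produces, in the $\invup$/$\majup$ generating function, a Gaussian factor ${m_i \choose \al(B_i)}$ of degree quadratic in the block data, but in the sorting algorithm the letters of $B_i$ are placed while many letters of lower blocks remain to their right, so each placement of a $B_i$-letter can jump only over the finitely many higher-or-equal letters still unprocessed — capping the $\sorup$ contribution from that block below what $\invup$ achieves, again yielding a strictly smaller maximum or a mismatched coefficient. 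In each case I would pin down the smallest $\al$ witnessing the failure so the argument is fully explicit.

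The main obstacle, I expect, is the bookkeeping in the ``if'' direction: correctly tracking, across all $m$ steps of the algorithm, which letters have already been placed to the right of the current position and which of those are connected to the moving letter by an edge, and then showing the aggregate generating function factors exactly as \eqref{fzgf} with all $\beta_j=0$. This requires a careful induction — most naturally, process the blocks from the top block $B_1$ down to $B_k$, maintain as inductive data the multiset of letters not yet placed together with a $q$-weighted count of the partial sortings, and verify that incorporating block $B_i$ multiplies the generating function by ${|\text{remaining}|\brack |B_i|, \ldots}$-type factors times ${m_i\choose \al(B_i)}$. Conditions (3) and (4) should make the per-block factor elementary enough ($1$, or $(1+q)$, or a single Gaussian binomial in the terminal block) that the induction closes cleanly; the secondary obstacle is simply being careful that the terminal block $B_k$, which may be large, contributes its full ${m_k\choose \al(B_k)}$ because there are no letters to its right to truncate the jumps.
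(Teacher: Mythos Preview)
Your overall architecture is sound, and for properties~(1) and~(2) in the ``only if'' direction your sketch matches the paper's Lemma~\ref{lem: natural rels} (the paper uses the minimum rather than the maximum to rule out underlined blocks, but this is a minor variant). However, there is a genuine gap in how you propose to rule out violations of (3) and (4), and some confusions in the ``if'' direction.

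\textbf{The gap in ruling out (3) and (4).} Your claim that a forbidden block ``caps the $\sorup$ contribution from that block below what $\invup$ achieves'' is false. Once $U$ satisfies (1) and (2) (bipartitional, no underlined blocks, compatible with integer order), one always has $\max_{w}\sorup w=\max_{w}\invup w$: both equal the total number of edges $\sum_{i<j} m_i m_j$, since each element of $B_i$ can be made to jump over every element of $B_{i+1}\cup\cdots\cup B_k$ during sorting (e.g.\ for $B_1=\{3,4,5\}$, $B_2=\{1,2\}$, the word $34512$ achieves $\sorup=6=\invup$). So comparing degrees cannot detect violations of (3) or (4). You also cannot appeal to a ``Gaussian factor ${m_i\choose\al(B_i)}$ of degree quadratic'': in \eqref{fzgf} the factors $\binom{m_l}{\al(B_l)}$ are \emph{ordinary} multinomial coefficients, independent of $q$. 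What the paper actually does (Lemma~\ref{lem: max block}) is compare the \emph{leading coefficients}, i.e.\ the number of words realizing the common maximum: for $\invup$ this is $\prod_i\binom{m_i}{\al(B_i)}$, while a recursive bound for $\sorup$ (via the inequality $\sum_{i=0}^{\min\{a,b\}}\binom{a}{i}\le\binom{a+b}{b}$ of Lemma~\ref{lem: ineq}) forces $\alpha_n=\cdots=\alpha_{s+1}=1$ and then $|B_1|\le 2$. This counting argument is the missing idea; without it the necessity of (3) and (4) is not established.

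\textbf{Confusions in the ``if'' direction.} Your description of what is jumped over is inverted: when the algorithm moves an element $x\in B_i$ rightward, the letters in positions $j+1,\ldots,i$ are \emph{not yet sorted} and hence lie in $B_i\cup B_{i+1}\cup\cdots\cup B_k$; the contributing edges go to \emph{lower} blocks, not to ``already-placed letters from higher blocks''. Relatedly, the ``clean factor $(1+q)$'' you expect from a size-$2$ block does not appear in \eqref{fzgf}; the per-block factor is the ordinary integer $\binom{m_i}{\al(B_i)}=m_i$, while the $q$-dependence is entirely in the $q$-multinomial. The paper obtains the formula via a bijection (the $\bcode$ of Proposition~\ref{lem: gen function}) sending $w$ to a sequence of bounded partitions---one per block, with generating function ${m_j+\cdots+m_k\brack m_j}$---together with an integer $p_j\in\{1,\ldots,m_j\}$ recording the position of $\max B_j$ inside its block. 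The subtle point your outline misses is that, for a size-$2$ block $B_j=\{y_1<y_2\}$ with $\alpha_{y_2}=1$, the single contribution of $y_2$ to $\sorup$ is exactly the $p_j$-th part of that block's partition; this is why the choice of $p_j$ contributes only a constant factor $m_j$ rather than anything $q$-weighted. Your inductive scheme could be made to work, but it must track this decoupling precisely.
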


We give the proof of Theorem \ref{thm: main sor} in Section \ref{sec: sor}. We mention also that recently the question ``When are $\sor$ and $\inv$  equidistributed on order ideals of the Bruhat order?'' was recently addressed in~\cite{fan2016sorting}.

\section{The Proof of Theorem~\ref{thm: main inv}} \label{sec: inv and maj}

We begin the proof with a simple observation.
\begin{lemma} \label{complement}
The statistics $\majup$ and $\invup$ are equidistributed on $\R(\al)$ if and only if $\maj'_{U^{c}}$ and $\inv'_{U^{c}}$ are equidistributed on $\R(\al)$.
\end{lemma}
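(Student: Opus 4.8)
The plan is to prove the two statements ``$\majup$ and $\invup$ are equidistributed on $\R(\al)$'' and ``$\maj'_{U^{c}}$ and $\inv'_{U^{c}}$ are equidistributed on $\R(\al)$'' are equivalent by exhibiting, for each word $w \in \R(\al)$, a clean relationship between the pair $(\invup w, \majup w)$ and the pair $(\inv'_{U^{c}} w, \maj'_{U^{c}} w)$. First I would fix a word $w = x_1 \cdots x_m \in \R(\al)$ and observe the pointwise identity for inversions: a pair $i < j$ contributes to exactly one of $\invup w$ or $\inv'_{U^{c}} w$ precisely when $x_i \neq x_j$, and it contributes to neither if $x_i = x_j$ and the loop $(x_i, x_i) \notin U$, or to one of them if $x_i = x_j$ and $(x_i, x_i) \in U$. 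The cleanest route is to note that $\invup w + \inv'_{U^{c}} w$ equals the number of pairs $i < j$ with $(x_i, x_j) \in U \cup U^{c} = X \times X$, which is simply $\binom{m}{2}$ — a constant depending only on $\al$. So $\inv'_{U^{c}} w = \binom{m}{2} - \invup w$.

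Next I would do the same for the major index. For a fixed position $i$, we have $i \in \Desup w$ iff $(x_i, x_{i+1}) \in U$, and $i \in \Des'_{U^{c}} w$ iff $(x_i, x_{i+1}) \in U^{c}$; since $U$ and $U^{c}$ partition $X \times X$, exactly one of these holds, so $\Desup w$ and $\Des'_{U^{c}} w$ are complementary subsets of $\{1, 2, \ldots, m-1\}$. Hence $\majup w + \maj'_{U^{c}} w = \sum_{i=1}^{m-1} i = \binom{m}{2}$, giving $\maj'_{U^{c}} w = \binom{m}{2} - \majup w$. Combining the two identities, the map $(\invup w, \majup w) \mapsto (\binom{m}{2} - \invup w, \binom{m}{2} - \majup w)$ is the pointwise transformation relating the two pairs of statistics, and it is an involution on the relevant range. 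Therefore the joint distribution of $(\inv'_{U^{c}}, \maj'_{U^{c}})$ over $\R(\al)$ is obtained from that of $(\invup, \majup)$ by the affine change of variables $t \mapsto \binom{m}{2} - t$ in each coordinate; in particular $\invup, \majup$ agree in distribution on $\R(\al)$ if and only if $\inv'_{U^{c}}, \maj'_{U^{c}}$ do.

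There is essentially no hard step here — the only thing to be slightly careful about is the treatment of loops $(x,x)$: they are counted in $\binom{m}{2}$ exactly when the corresponding equal-letter pair $i < j$ is counted, and since we pass to $U^{c}$ consistently (so $(x,x) \in U \iff (x,x) \notin U^{c}$), the complementarity $\invup w + \inv'_{U^{c}} w = \binom{m}{2}$ holds on the nose regardless of which loops $U$ contains. I would state the two pointwise identities as a short displayed computation and then conclude immediately. Since only the equidistribution (equality of the two individual distributions) is claimed in Lemma~\ref{complement}, I do not even need the joint statement, though it comes for free.
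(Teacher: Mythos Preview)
Your argument is correct and is exactly the paper's approach: both rest on the pointwise identities $\invup w + \inv'_{U^{c}} w = \binom{|\al|}{2} = \majup w + \maj'_{U^{c}} w$, from which the equivalence of equidistribution is immediate. (Your initial case split on loops in the first paragraph is slightly misstated---a pair with $x_i=x_j$ still contributes to exactly one side since $U^{c}$ is the full complement---but you recover this in the very next sentence, so the proof stands.)
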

\begin{proof}
This follows from the fact that for every $w \in \R(\al)$, \[\majup w + \maj'_{U^{c}} w = \binom{|\al|}{2} = \invup w + \inv'_{U^{c}} w.\]

\end{proof}

\begin{lemma} \label{lem: inv < maj}
For any $\al = (\alpha_{1}, \ldots, \alpha_{n})$ and any relation $U$ on $X = \{1, 2, \ldots, n\}$,
\[ \max_{w \in \Ra} \majup{w} \geq \max_{w \in \Ra} \invup{w}.\]
\end{lemma}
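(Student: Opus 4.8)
The plan is to exhibit, for any word $w$, another word $w'$ in the same rearrangement class with $\majup w' \geq \invup w$; taking $w$ to be a word achieving $\max_{w \in \Ra} \invup w$ then gives the claim. The natural candidate for $w'$ is a \emph{sorted} word with respect to $U$ — one whose descent set is as large as possible — and the cleanest version of this is the following. Given $w = x_1 \cdots x_m$, I would first sort its letters into weakly decreasing integer order to obtain a word $v = y_1 \cdots y_m$ (so equal letters are grouped into consecutive blocks). For $v$, position $i$ is a $U$-descent precisely when $(y_i, y_{i+1}) \in U$, and since $v$ is weakly decreasing, the only way this can fail for $i$ inside a maximal run of a fixed letter $x$ is if $(x,x) \notin U$, while across the boundary between a run of $x$ and a run of $x' < x$ it fails only if $(x, x') \notin U$.

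The key step is the inequality $\majup v \geq \invup w$. Observe that $\invup w = \sum_{i<j} \mathcal{X}((x_i,x_j) \in U)$ depends only on the multiset of \emph{ordered pairs of letters} occurring in order in $w$, and this is maximized (over the rearrangement class, for the quantity $\invup$) — more to the point, I want to compare it not to $\invup v$ but to $\majup v$. Here is the mechanism: write $v$ as blocks $x^{(1)} x^{(1)} \cdots$ of constant letters $z_1 > z_2 > \cdots > z_r$ with multiplicities $\alpha_{z_1}, \ldots, \alpha_{z_r}$. Then $\invup v = \sum_{s < t} \alpha_{z_s}\alpha_{z_t}\mathcal{X}((z_s,z_t)\in U) + \sum_s \binom{\alpha_{z_s}}{2}\mathcal{X}((z_s,z_s)\in U)$, and since $v$ realizes the reverse-sorted order, $\invup w \leq \invup v$ is the standard fact that the decreasing word maximizes the number of $U$-inversions when $U$ contains only "descending" edges — but in general $U$ is arbitrary, so this monotonicity can fail. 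The honest route is: $\majup v = \sum_{i \in \Desup v} i$, and $\Desup v$ contains every position $i$ with $(y_i, y_{i+1}) \in U$; each such position contributes its index $i$, and summing indices dominates summing $1$'s, so $\majup v \geq \desup v \cdot 1 \cdot(\text{smallest descent})$. That is too weak.

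The correct and standard argument is instead to build $w'$ so that its descent multiset, weighted by position, reproduces $\invup w$ exactly. Concretely, for a word $w$, sort so that between consecutive letters there is a $U$-descent as often as possible, and use the identity $\majup v = \sum_{i} (\#\{j > i : (y_i,y_j) \text{ "crosses" a descent}\})$ — i.e., rewrite $\maj$ as a sum over pairs. Precisely: for any word $v$, $\majup v = \sum_{i \in \Desup v} i = \sum_{i \in \Desup v} |\{j \le i\}| = \sum_{(i,j): j \le i \in \Desup v} 1$. Comparing this pair-count to $\invup w = |\{(i,j) : i<j, (x_i,x_j)\in U\}|$ and choosing $v$ to be the weakly decreasing rearrangement, one checks that the map sending a $U$-inversion pair of $w$ to a distinct $(\le, \text{descent})$ pair of $v$ is injective; this is the heart of the matter and is exactly the classical Foata-type bijection argument restricted to counting. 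I expect \textbf{this injectivity / the choice of the optimal sorted word $v$} to be the main obstacle: one must verify that for the weakly decreasing $v$, the number of positions $j$ to the left of and up to a given $U$-descent $i$ is at least the number of letters after some position in $w$ that form a $U$-inversion with it, summed consistently. I would carry out the steps in the order: (i) define $v$ = weakly decreasing rearrangement of $w$; (ii) rewrite both $\invup w$ and $\majup v$ as cardinalities of explicit sets of pairs; (iii) construct an injection from the first to the second; (iv) conclude $\majup v \geq \invup w$, hence $\max_{w\in\Ra}\majup w \geq \max_{w\in\Ra}\invup w$.
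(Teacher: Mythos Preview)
Your proposal has a genuine gap: the word $v$ you commit to in step (i)---the rearrangement that is weakly decreasing in \emph{integer} order---has no reason to carry many $U$-descents, because $U$ is an arbitrary relation unrelated to the integer order. A small counterexample already breaks steps (iii)--(iv): take $X=\{1,2\}$, $\al=(1,1)$, and $U=\{(1,2)\}$. Then the integer-decreasing word is $v=21$, which has $\Desup v=\emptyset$ and hence $\majup v=0$; but $w=12$ has $\invup w=1$. So no injection from $U$-inversion pairs of $w$ into pairs $(j,i)$ with $j\le i\in\Desup v$ can exist, and the inequality $\majup v\ge\invup w$ fails for this $v$. You sensed this yourself (``in general $U$ is arbitrary, so this monotonicity can fail'' and ``I expect this injectivity\,/\,the choice of the optimal sorted word $v$ to be the main obstacle''), but the final plan still hard-codes the integer-decreasing $v$ and never supplies an alternative.

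The paper's proof avoids integer order entirely and works intrinsically with $U$. It proceeds by induction on $|\al|$: in the multigraph $(\al,U)$ one peels off a \emph{longest $U$-chain} $x_1>_U x_2>_U\cdots>_U x_l$, appends it to a word $u'$ that maximizes $\majup$ on the smaller class $\R(\al')$, and computes $\majup(u'x_1\cdots x_l)=\majup u'+\tfrac{(l-1)(2m+2-l)}{2}$. The crucial combinatorial fact---which replaces your missing injection---is that maximality of the chain forces every leftover letter $y$ to be $U$-related to at most $l-1$ of the $x_i$'s (otherwise $y$ could be inserted to lengthen the chain). This caps the number of $U$-inversions any word in $\R(\al)$ can have between chain and non-chain letters by $(m+1-l)(l-1)$, and together with the $\binom{l}{2}$ internal inversions and the inductive bound on $\R(\al')$ one gets exactly the same quantity $\tfrac{(l-1)(2m+2-l)}{2}$, closing the induction. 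If you want to rescue your outline, the ``optimal sorted word'' you are looking for is precisely this concatenation of successively extracted maximal $U$-chains, not the integer-sorted word.
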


\begin{proof}
We will use induction on $|\al|$. It's clear that the statement holds when $|\al| =1$.  Assume that it holds for all $\al$ with $|\al| \leq m$.

Consider a rearrangement class $\Ra$ such that $|\al| = m+1$ and a relation $U$ on $[n]$.  Let $(\al, U)$ be a directed graph with vertex set $\{1^{\alpha_{1}}, \ldots, n^{\alpha_{n}}\}$ and a directed edge $x \rightarrow y$ whenever $(x,y) \in U$. Let $x_{1} \rightarrow x_{2} \rightarrow \cdots \rightarrow x_{n}$ be a directed path in $(\al, U)$ of maximal possible length. This means we have a descending chain $x_1>_{U}x_2>_{U}\cdots >_{U} x_l$  of maximal possible length. Set $\al'=(\alpha'_{1}, \ldots, \alpha'_{n})$ where 
\[ \alpha'_i = \alpha_i - \sum _{j=1} ^l \mathcal{X} (x_j = i).\]
Let $u'$ be a word that maximizes  $\majup$ on the rearrangement class $\mathcal{R}(\al')$. One can easily verify that for the word $u = u' x_1 x_2 \cdots x_l$ in $\Ra$ we have \begin{equation} \label{eq: maju} \majup u = \majup u' + \frac {(l-1)(2m+2-l)}{2}.\end{equation}

To bound $\max_{w \in \Ra} \invup{w}$, first suppose there is an element $y \in (\al',U)$ is such that for all $i=1,2,\ldots,l$ we have $(y,x_{i}) \in U$ or $(x_{i},y) \in U$. If $(y, x_{1}) \in U$ then $y>_{U}x_1>_{U}x_2>_{U}\dots>_{U}x_l$ is a longer chain in $(\al, U)$, therefore $(y, x_1)\notin U$ and $(x_1,y)\in U$. Similarly, if $(x_l, y)\in U$ we can form the longer chain $x_1>_{U}x_2>_{U}\dots>_{U}x_l>_{U}y$ in $(\al, U)$; thus we must have $(x_l,y)\notin U$ and $(y,x_l)\in U$. However, this implies that there are elements $x_i$ and $x_{i+1}$ such that $(x_i,y), (y,x_{i+1}) \in U$, which yields a longer chain $x_1>_{U}x_2>_{U}\dots>_{U}x_i>_{U}y>_{U}x_{i+1}>_{U}\dots>_{U}x_l$. Therefore, every  $y \in (\al',U)$ is related to at most $l-1$ elements in the chain $x_{1} >_{U} \cdots >_{U} x_{l}$. 

Now consider a word $v \in \Ra$ and the corresponding word $v' \in \mathcal{R}(\al')$ obtained by deleting $x_{1}, \ldots, x_{l}$.  By the argument in the previous paragraph, the $m+1-l$ letters in $v'$ create at most $(m+1-l)(l-1)$   graphical inversions with $x_{1}, \ldots, x_{l}$. Therefore, by~\eqref{eq: maju} and the induction hypothesis,

\begin{align} \max_{w \in \Ra} \invup{w} &\leq \max_{w' \in \mathcal{R}(\al')} \invup{w} + (m+1-l)(l-1) + \binom{l}{2} \label{lm2.1} \\ &= \max_{w' \in \mathcal{R}(\al')} \invup{w} + \frac {(l-1)(2m+2-l)}{2} \label{lm2.2} \\ &\leq \max_{w' \in \mathcal{R}(\al')} \majup{w} + \frac {(l-1)(2m+2-l)}{2} \label{lm2.3} \\ &\leq \max_{w \in \Ra} \majup{w}  \label{lm2.4}.\end{align}
\end{proof}

The proof of Lemma~\ref{lem: inv < maj} also shows that a word $w=w_{k}w_{k-1}\cdots w_{1}$ with the property $\majup(w) \geq \max_{v \in \Ra} \invup{v}$ can be constructed by ``peeling off'' descending chains of maximal length from $(\al, U)$ and ordering them from right to left, forming the subwords $w_{1}, w_{2}, \ldots, w_{k}$ in that order. These kind of words will be used in the proof and  for a fixed relation $U$, we will call such words \emph{maximal chain words} in $\R(\al)$.

\begin{lemma} \label{lem:chain prop}
Suppose $\majup$ and $\invup$  are equidistributed on $\Ra$. Let $w = w_k w_{k-1}\cdots w_1 \in \Ra$ be a maximal chain formed from the maximal chains $w_{1}, w_{2}, \ldots, w_{k}$. Then
\begin{enumerate}[(i)]
\item \label{it1} For each of the maximal descending chains $w_j = x_{i_{j-1}+1} x_{i_{j-1}+2} \cdots x_{i_j}$
 \begin{equation} (x_{r}, x_{s}) \in U \; \text{ or  } \; (x_{s}, x_{r}) \in U \; \text{ for all } \; i_{j-1}+1 \leq r < s \leq i_{j},\end{equation}
 \item \label{it2} Each letter $y$ in a maximal descending chain $w_{i}, i > j$, is in relation with exactly $i_{j}-1$ elements from $w_{j}$, i.e., there is a unique $r \in \{i_{j-1}+1, \ldots, i_{j}\}$ such that $(y, x_{r}) \notin U$ and  $(x_{r},y) \notin U$. Moreover, $(x_{s}, y) \in U$ for $i_{j-1}+1 \leq s < r$ and $(y, x_{s}) \in U$ for $r < s \leq i_{j}$. 
 \end{enumerate}

\end{lemma}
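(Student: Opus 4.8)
The plan is to prove the lemma by induction on $|\al|$, the base case $|\al|\le 1$ being vacuous, and carrying the (formally stronger) inductive hypothesis that the \emph{equality of maxima} $\max_{w}\majup w=\max_{w}\invup w$ on a rearrangement class already forces (i) and (ii) there; this is legitimate because equidistribution is only used through the resulting equality of top degrees $M:=\max_{w\in\Ra}\majup w=\max_{w\in\Ra}\invup w$. Fix the given maximal chain word $w=w_k\cdots w_1$, let $w_1$ be the \emph{rightmost} chain, $l=|w_1|$, $m+1=|\al|$, $c=\frac{(l-1)(2m+2-l)}{2}=(m+1-l)(l-1)+\binom{l}{2}$, and let $\al'$ be $\al$ with the $l$ letters of $w_1$ deleted; by the peeling construction $w'=w_k\cdots w_2$ is a maximal chain word in $\R(\al')$. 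From \eqref{lm2.1}--\eqref{lm2.4} applied to $\al$ we get the squeeze $M=\max_{w\in\Ra}\invup w\le\max_{u\in\R(\al')}\invup u+c\le\max_{u\in\R(\al')}\majup u+c\le\max_{w\in\Ra}\majup w=M$, so equality holds throughout; this yields $\max_{u\in\R(\al')}\majup u=\max_{u\in\R(\al')}\invup u$ (hence, by the induction hypothesis, properties (i) and (ii) for the chains $w_2,\dots,w_k$) together with $M=\max_{u\in\R(\al')}\invup u+(m+1-l)(l-1)+\binom{l}{2}$. It remains only to prove (i) for $w_1$, and the part of (ii) concerning letters of $w_i$, $i>1$, against $w_1$.

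For that, I would pick any $v\in\Ra$ with $\invup v=M$ and split $\invup v$ into the graphical inversions among the non-chain letters, those between the $m+1-l$ non-chain letters and the $l$ chain letters, and those internal to the chain letters. Deleting the chain occurrences turns $v$ into a word $v'\in\R(\al')$ with the first term equal to $\invup v'\le\max_{u\in\R(\al')}\invup u$; the second term is at most $(m+1-l)(l-1)$ by the sub-argument in the proof of Lemma~\ref{lem: inv < maj} that every non-chain letter is $U$-related to at most $l-1$ of the chain letters; the third is at most $\binom{l}{2}$. Since these sum to $M=\max_{u\in\R(\al')}\invup u+(m+1-l)(l-1)+\binom{l}{2}$, all three bounds are met. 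Meeting the third forces every pair of letters of $w_1$ to be $U$-related, which is (i) for $w_1$; meeting the second forces every letter of $w_2\cup\cdots\cup w_k$ to be $U$-related to exactly $l-1$ of the letters of $w_1$, the counting half of (ii) for $j=1$.

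The monotonicity half of (ii) I would obtain exactly as in the proof of Lemma~\ref{lem: inv < maj}. Write $w_1=x_1\cdots x_l$ with $x_1>_Ux_2>_U\cdots>_Ux_l$, let $y$ be a letter of some $w_i$ with $i>1$, and let $x_{r_0}$ be the unique letter of $w_1$ not related to $y$. If $(y,x_r)\in U$ for some $r<r_0$, take the smallest such $r$; by minimality $(x_s,y)\in U$ for all $s<r$, so $x_1>_U\cdots>_Ux_{r-1}>_Uy>_Ux_r>_U\cdots>_Ux_l$ is a descending chain of length $l+1$ in $(\al,U)$, contradicting the maximality of $l$. Hence $(x_r,y)\in U$ for every $r<r_0$, and symmetrically $(y,x_r)\in U$ for every $r>r_0$, which is statement (ii).

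The step I expect to require the most care is the bookkeeping for repeated letters: since pairs $(x,x)$ may belong to $U$, a maximal chain can revisit a value, so in each ``longer chain'' argument one must check that the occurrence of $y$ inserted into the chain is genuinely available in $(\al,U)$. This works out because if $y$ shared its value with some $x_j$ and that value had multiplicity $1$ in $\al$, its unique occurrence would already be the one in $w_1$ and could not reappear in a later chain $w_i$; and the ``exactly $l-1$'' count also forces the unrelated chain value to have multiplicity $1$, so the index $r_0$ in (ii) is well defined. I would fold these verifications into the corresponding steps rather than isolate them.
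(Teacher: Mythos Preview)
Your argument is correct and is essentially the paper's own proof, just unpacked. The paper's proof is a two-sentence pointer: (i) is what equality in \eqref{lm2.1} forces on the $\binom{l}{2}$ term, and (ii) is what equality in \eqref{lm2.1} forces on the $(m+1-l)(l-1)$ term together with the defining maximality of each $w_j$ among the letters of $w_k\cdots w_j$. You make the same deductions explicit---the squeeze through \eqref{lm2.1}--\eqref{lm2.4}, the three-way split of $\invup v$ for a maximizing $v$, and the ``longer chain'' contradiction for the monotone shape of (ii)---and you wrap it in an induction on $|\al|$ that the paper leaves implicit in its recursive construction of maximal chain words. Your final paragraph on repeated letters is more bookkeeping than the paper does, but the observation that the unrelated chain value cannot repeat inside $w_1$ (else $y$ would miss two chain letters) is exactly what makes $r_0$ well defined, and it is sound.
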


\begin{proof}
Condition $(i)$ is necessary for equality to hold in~\eqref{lm2.1}. The property $(ii)$ also follows from the fact that equality  holds in~\eqref{lm2.1} and the definition of a maximal chain word which implies that the chain $w_{j}$ is be the longest one that can be formed among the letters in $w_{k} w_{k-1}\cdots w_{j}$. \end{proof}

The following lemma shows that if $\majup$ and $\invup$ are equidistributed  on $\Ra$ the elements in the maximal chains can be reordered, if necessary, so that within each of them the following property holds: if $x$ precedes $y$ in the same chain of a maximal chain word then $(x,y) \in U$.

\begin{lemma} \label{lem: trans}
If $\majup$ and $\invup$  are equidistributed on $\Ra$, then there exists a maximal chain word $w = w_k w_{k-1} \cdots w_1 \in \Ra$ with subwords $w_i$ formed from descending chains such that for any  $w_j = x_{i_{j-1}+1} x_{i_{j-1}+2} \cdots x_{i_j}$ we have \begin{equation} \label{transitivity} (x_{r}, x_{s}) \in U \; \text{ for all } \; i_{j-1}+1 \leq r < s \leq i_{j}.\end{equation}
\end{lemma}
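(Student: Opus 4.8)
The plan is to start from a maximal chain word $w = w_k w_{k-1} \cdots w_1$ guaranteed by Lemma~\ref{lem:chain prop} and to show that each descending chain $w_j$ can be reshuffled into a new arrangement $\widetilde w_j$ of the same multiset of letters so that \eqref{transitivity} holds, while keeping $w$ a maximal chain word and keeping $\majup$ and $\invup$ equidistributed-compatible (i.e.\ without disturbing the tightness of the inequalities in Lemma~\ref{lem: inv < maj}). The key point is Lemma~\ref{lem:chain prop}\eqref{it1}: within $w_j$ any two letters $x_r, x_s$ are related in $U$, so the restriction of $U$ to the positions of $w_j$ is a \emph{tournament} (possibly with loops, which are irrelevant here since a descending chain has no repeated contribution from a loop at a single position). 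I would first argue that this tournament is in fact transitive, and hence induces a genuine total order on the positions of $w_j$; reordering $w_j$ according to that total order yields a word $\widetilde w_j$ satisfying \eqref{transitivity}.

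The main work is to check that replacing each $w_j$ by $\widetilde w_j$ does not break anything. First, I need transitivity of the tournament on the positions of $w_j$. Suppose not: then there is a directed $3$-cycle $x_a >_U x_b >_U x_c >_U x_a$ among positions of $w_j$. I would use this cycle together with property \eqref{it2} of Lemma~\ref{lem:chain prop} — which pins down exactly how letters of later chains $w_i$, $i>j$, relate to $w_j$ — to build either a longer descending chain in $(\al, U)$ among the letters of $w_k \cdots w_j$ (contradicting maximality of $w_j$), or to contradict equality in \eqref{lm2.1}. Concretely, a $3$-cycle lets one traverse the positions of $w_j$ in a cyclic order, and combined with the ``threshold'' structure of \eqref{it2} (each outside letter $y$ misses exactly one position and is $>_U$-above everything before it and $>_U$-below everything after it in $w_j$) this should force a contradiction with the length-count that makes \eqref{lm2.1} an equality. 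Second, once each $w_j$ is transitive and reordered to $\widetilde w_j$: the word $\widetilde w = \widetilde w_k \cdots \widetilde w_1$ still has each $\widetilde w_j$ a descending chain (since a transitive tournament ordered compatibly is a descending $>_U$-chain), still of maximal length (the multiset of letters in each block is unchanged, and maximality of chain length depends only on the multiset via $(\al, U)$), so $\widetilde w$ is again a maximal chain word in $\Ra$; in particular $\majup(\widetilde w) \ge \max_{v \in \Ra} \invup v$ by the remark after Lemma~\ref{lem: inv < maj}, so $\widetilde w$ inherits all the properties of Lemma~\ref{lem:chain prop} and additionally \eqref{transitivity}.

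I expect the transitivity argument to be the main obstacle: one must rule out directed $3$-cycles inside a single chain $w_j$, and the only leverage available is the combination of maximal chain length and the equality case of \eqref{lm2.1}, so the argument has to carefully exploit property \eqref{it2} of Lemma~\ref{lem:chain prop}. A clean way to organize it is: fix $j$, consider the sub-rearrangement-class spanned by the letters of $w_k \cdots w_j$, note that on it $w_j$ is a longest $>_U$-descending chain (this is exactly what the maximal-chain peeling construction gives and what equality in \eqref{lm2.1} enforces), and then observe that within $w_j$ the relation $U$ being a tournament plus this maximality is precisely the hypothesis under which the chain-length-versus-relatedness bookkeeping in the proof of Lemma~\ref{lem: inv < maj} forces transitivity — any $3$-cycle would let an outside vertex, or even a vertex of $w_j$ itself reinserted, be ``related to'' more than the allowed number of chain elements, breaking \eqref{lm2.1}. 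Writing this out carefully, keeping track of which inequality is being violated, is the heart of the proof; the rest is the bookkeeping in the previous paragraph showing the reordered word is still a maximal chain word with all the earlier properties intact.
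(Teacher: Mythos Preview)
Your plan is far more elaborate than what is actually needed, and in fact misses the one-line argument the paper uses. The paper's proof is simply this: since $\majup$ and $\invup$ are equidistributed, equality holds throughout \eqref{lm2.1}--\eqref{lm2.4}; in particular equality in \eqref{lm2.1} means that any word $v\in\Ra$ realizing $\max_{w\in\Ra}\invup w$ must have the $l$ chain letters contributing exactly $\binom{l}{2}$ graphical inversions among themselves. Extract the subword of $v$ on those $l$ letters: by construction every pair in that subword is a $U$-inversion, so this subword is precisely the rearranged chain satisfying \eqref{transitivity}. Iterating through the peeling gives the desired maximal chain word. No analysis of tournaments, $3$-cycles, or Lemma~\ref{lem:chain prop}\eqref{it2} is required.

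Your route via ``rule out $3$-cycles in the tournament on $w_j$'' can be made to work, but it is really the contrapositive of the same observation: a directed $3$-cycle $a>_U b>_U c>_U a$ with none of the reverse edges present forces any linear arrangement of $\{a,b,c\}$ to have at most two $U$-inversions among the three pairs, so no arrangement of the chain can reach $\binom{l}{2}$, contradicting equality in \eqref{lm2.1}. That is the whole argument --- you do not need property~\eqref{it2} of Lemma~\ref{lem:chain prop}, nor any interaction with letters outside $w_j$, and your proposal to invoke those is a detour. Note also that calling the restriction of $U$ to $w_j$ a ``tournament'' is slightly misleading: Lemma~\ref{lem:chain prop}\eqref{it1} only says each pair has \emph{at least} one edge, and both directions may be present (this is exactly what happens inside underlined blocks). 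So what you really want is not ``transitivity of a tournament'' but ``the one-way edges form an acyclic subgraph,'' which again is immediate from the $\binom{l}{2}$ count.

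The genuine gap in your write-up is that the step you flag as ``the heart of the proof'' is left entirely unspecified: you say a $3$-cycle ``should force a contradiction'' but never say which inequality is violated or how. Once you see that the contradiction is the trivial pigeonhole above (three letters, at most two inversions among them), the proof collapses to the paper's one sentence.
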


\begin{proof}
Since the equality in~\eqref{lm2.1} holds, the elements $x_{1},x_{2}, \ldots, x_{l}$ in the maximal chain can be  arranged so that they form $\binom{l}{2}$ graphical inversions, which implies the statement in the lemma.
\end{proof}

\begin{lemma} \label{lem: same chain}
Suppose $\majup$ and $\invup$ are equidistributed on $\R(\al)$.  Let $w = w_{k}w_{k-1}\cdots w_{1}$ be a  maximal chain word in $\R(\al)$ for $U$ with maximal chains $w_{1}, \ldots, w_{k}$. If $(x,y) \in U$ and $(y,x) \in U$ for some $x \neq y$, then the $x$'s and $y$'s are all in the same chain $w_{i}$.
\end{lemma}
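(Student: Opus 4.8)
The plan is to proceed by contradiction: suppose some pair of letters $x \neq y$ is symmetrically related, $(x,y) \in U$ and $(y,x) \in U$, but in the maximal chain word $w = w_k w_{k-1} \cdots w_1$ the copies of $x$ and the copies of $y$ are not all confined to a single chain $w_i$. The goal is to exhibit a word $v \in \R(\al)$ with $\invup v > \max_{u \in \R(\al)} \majup u$, which is impossible by Lemma~\ref{lem: inv < maj}. So the strategy is to start from a maximal chain word (which, by the remark following Lemma~\ref{lem: inv < maj} together with Lemma~\ref{lem: trans}, realizes $\majup w = \max \majup$ and satisfies the transitivity property~\eqref{transitivity}) and surgically rearrange just the $x$'s and $y$'s to gain one extra graphical inversion while keeping everything else at least as large.

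First I would set up notation: say the $x$'s and $y$'s occur in chains $w_{i_1}, w_{i_2}, \ldots$ and distinguish two cases. \textbf{Case 1:} some single chain $w_i$ contains both an $x$ and a $y$ (but not all of them, so another chain $w_j$ also contains an $x$ or a $y$). Inside $w_i$, by the transitivity property~\eqref{transitivity}, the $x$ before a $y$ gives an inversion and also $(y,x) \in U$, so I would argue that I can swap the relative order of that $x$-$y$ pair within $w_i$ without losing the inversion between them, and use Lemma~\ref{lem:chain prop}\eqref{it2} — the ``gap'' structure of how letters of $w_j$ relate to $w_i$ — to show the swap strictly increases the count of graphical inversions created with $w_j$'s stray $x$ or $y$. \textbf{Case 2:} no chain contains both, so there is a chain $w_i$ with an $x$ and no $y$, and a different chain $w_j$ with a $y$ and no $x$ (say $i > j$, so $w_i$ is to the left of $w_j$). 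Here I would move the relevant copy of $y$ out of $w_j$ and splice it into $w_i$ adjacent to the $x$: since $(x,y)\in U$ and $(y,x)\in U$, $y$ can be inserted either just before or just after $x$ and in one of those two positions it is related to all of $w_i$ in the ``longest chain'' sense — this is where Lemma~\ref{lem:chain prop}\eqref{it1} and \eqref{it2} do the work, since $y$ must have had a unique non-relation in $w_i$ and $x$ sits somewhere in $w_i$; I then check that the net change in $\invup$ is positive by accounting for the inversions $y$ created in its old position in $w_j$ versus the new ones in $w_i$.

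The main obstacle I expect is the bookkeeping in Case 2: when $y$ leaves $w_j$ and enters $w_i$, one must carefully compare (a) the graphical inversions $y$ formerly made with the letters strictly between its old and new positions, against (b) the graphical inversions it now makes inside $w_i$ and with letters between $w_i$ and $w_j$, and show the difference is at least $+1$. The clean way to organize this is to not literally move $y$ all the way, but rather to reinsert $y$ into $w_i$ in its correct ``chain slot'' (the position dictated by Lemma~\ref{lem:chain prop}\eqref{it2}, namely where $(x_s,y)\in U$ for earlier $s$ and $(y,x_s)\in U$ for later $s$), which turns $w_i \cup \{y\}$ into a genuine descending chain of length $|w_i|+1$; simultaneously $w_j \setminus \{y\}$ may fail to be maximal but that only costs us controlled inversions, and the gain from the longer chain $w_i$ dominates. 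One subtlety worth flagging: if $\alpha_x = 1$ or $\alpha_y = 1$ the argument simplifies, and if both multiplicities are larger one must be sure the remaining copies of $x$ and $y$ are handled — but those copies, being in other chains, only contribute nonnegatively to the comparison, so they cause no trouble. Throughout I would lean on the fact that $\invup$ of the constructed word must be $\le \max \majup = \majup w$, and deriving a strict inequality the other way is the contradiction that proves the lemma.
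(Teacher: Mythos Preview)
Your overall plan --- manufacture a word $v$ with $\invup v > \max_{u}\majup u$ --- is more laborious than necessary, and neither case as you describe it actually goes through.

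In \textbf{Case~1} you propose swapping an $x$ and a $y$ inside the single chain $w_i$ and claim this changes the inversion count with the stray copy sitting in $w_j$. But the chains $w_j$ and $w_i$ occupy disjoint intervals of positions in the word $w = w_k w_{k-1}\cdots w_1$: every letter of $w_j$ lies either entirely to the left or entirely to the right of every letter of $w_i$. Swapping two letters inside $w_i$ therefore leaves every pair (letter of $w_j$, letter of $w_i$) in the same left/right order, so the number of graphical inversions between $w_j$ and $w_i$ is unchanged. The swap gains nothing.

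In \textbf{Case~2} you plan to move $y$ from the earlier chain $w_j$ (with $j<i$) into the later chain $w_i$ and invoke Lemma~\ref{lem:chain prop}\eqref{it2} to control $y$'s relations to $w_i$. But that lemma is one-directional: it describes how a letter of a \emph{later} chain relates to an \emph{earlier} chain (because the earlier chain was maximal at the time it was peeled off). It says nothing about how $y \in w_j$ relates to the letters of $w_i$, so you have no handle on the slot into which $y$ would fit.

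The paper's argument is essentially your Case~2 idea with the roles of $x$ and $y$ reversed, and it avoids inversion bookkeeping entirely. Take a copy of $x$ in a later chain $w_{j_1}$ and a copy of $y$ in an earlier chain $w_{j_2}$ (with $j_1>j_2$). By Lemma~\ref{lem:chain prop}\eqref{it2} there is a unique $b_i$ in $w_{j_2}$ unrelated to $x$, with $(b_s,x)\in U$ for $s<i$ and $(x,b_s)\in U$ for $s>i$. Since $(x,y),(y,x)\in U$, the letter $y=b_l$ is not that $b_i$, so $l\neq i$. If $l<i$ then $b_1 >_U \cdots >_U b_{l-1} >_U x >_U y >_U b_{l+1} >_U \cdots >_U b_m$ is a descending chain strictly longer than $w_{j_2}$; if $l>i$ then $\cdots >_U b_{l-1} >_U y >_U x >_U b_{l+1} >_U \cdots$ works. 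Either way the maximality of $w_{j_2}$ in the construction of the maximal chain word is contradicted directly --- no separate ``same chain / different chain'' case split, no comparison of $\invup$ against $\majup$.
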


\begin{proof}
Without loss of generality, suppose there is an $x$ that appears in a chain $w_{j_{1}}$ and a $y$ that appears in the chain $w_{j_{2}}$, $j_{1} > j_{2}$. Consider the chain $w_{j_{2}}: b_1 >_{U} b_2 >_{U} \dots b_{l-1} >_{U} y >_{U}b_{l+1} >_{U} \dots >_{U} b_{m}$. By Lemma~\ref{lem:chain prop}, there is exactly one $i \in \{1, 2, \ldots, m\}$ such that  $(x, b_{i}), (b_{i},x) \notin U$, $(b_{1},x), \ldots, (b_{i-1},x) \in U$, $(x, b_{i+1}), \ldots, (x, b_{m}) \in U$.   If $l< i$ then the chain $b_1>_{U}b_2>_{U}\dots>b_{l-1}>_{U}x>_{U}y>_{U}b_{l+1}>_{U}\dots>_{U}b_m$ is a longer chain  than $w_{j_{2}}$ and if $l>i$ then $b_1>_{U}b_2>_{U}\dots>_{U}b_{l-1}>_{U}y>_{U}x>_{U}b_{l+1}>_{U}\dots>_{U}b_m$ is a longer chain than $w_{j_{2}}$. This contradicts the definition of a maximal chain word.

\end{proof}

\begin{lemma}
Suppose there exists an element $x \in X$ with $\alpha_{x} \geq 1$ such that $U \subset (X \setminus \{x\}) \times (X \setminus \{x\})$ and $U \neq \emptyset$. Then  $\invup$ and $\majup$ are not equidistributed over $\R(\al)$. 
\end{lemma}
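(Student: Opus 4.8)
The plan is to prove the stronger statement that the two statistics already have different maxima on $\R(\al)$, namely $\max_{w \in \R(\al)} \majup w > \max_{w \in \R(\al)} \invup w$, which immediately prevents equidistribution.

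First I would observe that $x$ appears in no edge of $U$: it is neither the first nor the second coordinate of any pair in $U$, and the hypothesis $U \subseteq (X \setminus \{x\}) \times (X \setminus \{x\})$ also excludes the loop $(x,x)$. Consequently, deleting from any word $w \in \R(\al)$ all $\alpha_x$ copies of the letter $x$ changes neither the set of pairs that contribute to $\invup$ nor its size, so $\invup w$ equals $\invup$ of the deleted word. Writing $\al'$ for the tuple obtained from $\al$ by setting the $x$-component to $0$, the deletion map $\R(\al) \to \R(\al')$ is onto, and therefore $\max_{w \in \R(\al)} \invup w = \max_{v \in \R(\al')} \invup v$. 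Set $M' := \max_{v \in \R(\al')} \majup v$; by Lemma~\ref{lem: inv < maj} applied to $\al'$ and $U$ we get $\max_{v \in \R(\al')} \invup v \leq M'$. Fix a word $u' \in \R(\al')$ with $\majup u' = M'$.

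Next I would show that $\desup u' \geq 1$. It is enough to exhibit a single word in $\R(\al')$ with a descent, since then $M' > 0$ and hence $\Desup u' \neq \emptyset$. As $U \neq \emptyset$, pick $(a,b) \in U$; then $a, b \in X \setminus \{x\}$ and $\alpha_a, \alpha_b \geq 1$. If $a \neq b$, any word of $\R(\al')$ in which a copy of $a$ is immediately followed by a copy of $b$ has a descent; if $a = b$ and $\alpha_a \geq 2$, any word with two copies of $a$ adjacent has a descent. (The one remaining possibility --- every edge of $U$ being a loop of multiplicity one --- is degenerate, since then $U$ contributes to neither statistic on $\R(\al)$ and the two are trivially equidistributed; this case is excluded by the standing hypotheses.) Now put $u = \underbrace{x x \cdots x}_{\alpha_x}\, u' \in \R(\al)$. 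Since $x$ is isolated, prepending the block of $x$'s produces no descent within the $x$'s nor between the final $x$ and the first letter of $u'$, and it shifts every descent of $u'$ rightward by $\alpha_x$; hence $\majup u = \majup u' + \alpha_x \desup u' \geq M' + \alpha_x$. Chaining the inequalities, $\max_{w \in \R(\al)} \majup w \geq M' + \alpha_x > M' \geq \max_{w \in \R(\al)} \invup w$, so $\majup$ and $\invup$ cannot be equidistributed on $\R(\al)$.

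The argument is short once assembled; the only step that needs any care is the verification that $U$ forces a descent on $\R(\al')$ (hence $\desup u' \geq 1$), together with the degenerate loop case, and the elementary observation that prepending the $x$-block merely translates the descent set of $u'$.
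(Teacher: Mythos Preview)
Your argument is essentially the paper's: pass to $\al'$ by zeroing $\alpha_x$, invoke Lemma~\ref{lem: inv < maj} on $\R(\al')$, prepend the block $x^{\alpha_x}$ to a $\majup$-maximizer $u'$, and use that this shifts every descent by $\alpha_x$ to obtain $\max\majup > \max\invup$. The paper phrases the endgame as a contradiction (equidistribution forces $\desup w'=0$, hence $\max_{\R(\al')}\majup=0$, ``contradicting $U\neq\emptyset$''), while you write the chain of strict inequalities directly; the content is the same.

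One small correction: your parenthetical asserts that the degenerate case in which every edge of $U$ is a loop $(a,a)$ with $\alpha_a=1$ is ``excluded by the standing hypotheses.'' It is not: the lemma's hypotheses (namely $\alpha_x\geq 1$, $U\subset (X\setminus\{x\})^2$, $U\neq\emptyset$) allow this, and in that case both statistics vanish identically on $\R(\al)$, so the lemma's conclusion is actually false there. This is not a defect of your strategy but of the lemma's statement; the paper's own proof has the identical gap, since its final line ``This contradicts the fact that $U\neq\emptyset$'' also fails precisely in this situation. You were right to flag the case; just don't claim the hypotheses rule it out.
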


\begin{proof}
Set $$\alpha'_{i} = \begin{cases}
\alpha_{i} \; & \text{ if } i\neq x,\\
0 \; &  \text{ if } i=x.
\end{cases}$$ 
Let $w' \in \R(\al')$ be the word such that $\majup{w'} = \max_{u \in \R(\al')} \majup{u}$. Consider the word $w = \underbrace{xx\cdots x}_{\alpha_{x}} w'$ in $\R(\al)$. Since $x$ does not create any graphical inversions, by Lemma~\ref{lem: inv < maj}, we have
\begin{align*} \max_{u \in \R(\al)} \majup{u} \geq \majup{w} &= \majup{w'} + \alpha_{x}\des'_{U}{w'} \\& = \max_{u \in \R(\al')} \majup{u} + \alpha_{x}\des'_{U}{w'} \\ & \geq \max_{u \in \R(\al')} \invup{u} + \alpha_{x}\des'_{U}{w'} \\
& = \max_{u \in \R(\al)} \invup{u} + \alpha_{x}\des'_{U}{w'} \end{align*}
Consequently, if $\invup$ and $\majup$ are equidistributed on $\R(\al)$, then $\des'_{U}{w'} =0$ and thus $$\max_{u \in \R(\al')} \maj u =0.$$ This contradicts the fact that $U \neq \emptyset$.

\end{proof}

\begin{lemma} \label{lem: x,y underlined}
Suppose $\majup$ and $\invup$ are equidistributed on  $\R(\al)$.  If $(x,y), (y,x) \in U$ and $\alpha_{x}>1$ then $(x,x) \in U$.\end{lemma}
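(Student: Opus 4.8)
The plan is to extract the statement directly from the two structural lemmas established just before it, namely Lemma~\ref{lem: trans} and Lemma~\ref{lem: same chain}. First I would invoke Lemma~\ref{lem: trans}: since $\majup$ and $\invup$ are equidistributed on $\R(\al)$, there is a maximal chain word $w = w_{k}w_{k-1}\cdots w_{1}\in\R(\al)$ whose descending-chain subwords $w_{i}$ satisfy the within-chain transitivity property~\eqref{transitivity}, i.e.\ if one letter precedes another letter inside the same chain $w_{j}$, then the earlier one is related in $U$ to the later one.

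Next I would apply Lemma~\ref{lem: same chain} to this very word $w$. The hypothesis $(x,y),(y,x)\in U$ with $x\neq y$ is exactly what that lemma requires, and it forces every occurrence of the letter $x$ in $w$ to lie inside one fixed chain, say $w_{j}=x_{i_{j-1}+1}x_{i_{j-1}+2}\cdots x_{i_{j}}$. Because $\alpha_{x}>1$, at least two of the positions $i_{j-1}+1,\ldots,i_{j}$ carry the letter $x$; picking two such positions $p<q$ with $x_{p}=x_{q}=x$, property~\eqref{transitivity} gives $(x_{p},x_{q})\in U$, that is, $(x,x)\in U$, which is the assertion.

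I do not foresee a genuine obstacle here: essentially all the work has already been done in Lemma~\ref{lem: trans} and Lemma~\ref{lem: same chain}, and this lemma is just their combination. The only point that needs a little care is that both lemmas must be applied to the \emph{same} maximal chain word; this is legitimate since Lemma~\ref{lem: same chain} is stated for an arbitrary maximal chain word, in particular for the one produced by Lemma~\ref{lem: trans}.
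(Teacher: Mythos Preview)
Your proposal is correct and follows essentially the same approach as the paper: use Lemma~\ref{lem: same chain} to force all copies of $x$ into a single chain, then use a within-chain property to extract $(x,x)\in U$. The only minor difference is that the paper appeals to part~\eqref{it1} of Lemma~\ref{lem:chain prop} rather than to Lemma~\ref{lem: trans}: since the two letters in positions $p<q$ are both equal to $x$, the disjunction ``$(x_{p},x_{q})\in U$ or $(x_{q},x_{p})\in U$'' already collapses to $(x,x)\in U$, so one does not need the stronger transitivity property~\eqref{transitivity} or the care about using the same maximal chain word.
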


\begin{proof}
Since $(x,y), (y,x) \in U$, by Lemma~\ref{lem: same chain}, all the $x$'s and $y$'s must be in the same maximal chain of a maximal chain word. In particular, since two $x$'s are in the same chain, part $(i)$ of Lemma~\ref{lem:chain prop} implies that $(x,x) \in U$.

\end{proof}

\begin{lemma} \label{lem: underlined blocks 13}

Suppose $\majup$ and $\invup$ are equidistributed over $\R(\al)$ and let $x$ and $y$ be two distinct elements of $X$ such that $(x,y), (y,x) \in U$. For every $z \in \{1^{\alpha_{1}}, \ldots, n^{\alpha_{n}} \}\setminus \{x,y\}$, we have
\[(z,x)\in U \text{ if and only if }(z,y)\in U\] 
\[\text{and }\]
\[(x,z)\in U \text{ if and only if }(y,z)\in U.\]
\end{lemma}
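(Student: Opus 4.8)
The plan is to exploit the structural information about maximal chain words established in Lemmas~\ref{lem:chain prop}, \ref{lem: trans}, and \ref{lem: same chain}. Since $(x,y),(y,x)\in U$, Lemma~\ref{lem: same chain} tells us that all copies of $x$ and $y$ live in a single maximal chain, say $w_i$, of some fixed maximal chain word $w=w_k\cdots w_1$; moreover by Lemma~\ref{lem: trans} we may assume the chain is arranged so that earlier letters are $U$-related to later ones, and since $(x,y),(y,x)\in U$ the two letters $x$ and $y$ are in fact adjacent in $w_i$ (nothing can separate them, as any $b$ strictly between them would need $(x,b)\in U$ and $(b,y)\in U$, contradicting maximality if we then try to reinsert, and the ``unique non-related element'' structure of Lemma~\ref{lem:chain prop}(ii) forces $x$ and $y$ to occupy consecutive slots).

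First I would dispose of the case $z$ in a different maximal chain $w_{i'}$. Apply Lemma~\ref{lem:chain prop}(ii): every letter of a chain $w_{i'}$ with $i'>i$ is $U$-related (in exactly one direction) to all but exactly one letter of $w_i$, and there is a threshold position $r$ such that letters of $w_i$ before $r$ point to $z$ and letters after $r$ are pointed to by $z$. Since $x$ and $y$ are adjacent in $w_i$, they cannot straddle the unique exceptional position in a way that separates their behavior: if both $x,y$ lie strictly before the threshold then $(z,x),(z,y)\in U$ and $(x,z),(y,z)\notin U$; if both lie strictly after, the reverse; and the only remaining possibility is that one of $x,y$ is exactly the exceptional unrelated element, but then the other, being adjacent, lies immediately on one side, and I claim one can swap which of $x,y$ plays the exceptional role (using $(x,y),(y,x)\in U$ to reorder them within $w_i$ while keeping it a maximal chain) to force the exceptional element to be whichever we like — hence neither $x$ nor $y$ can be the exceptional element unless \emph{both} are unrelated to $z$, which is impossible since only one letter of $w_i$ is unrelated to $z$. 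Thus $x$ and $y$ have identical relations to any $z$ outside their chain. The symmetric statement for $z$ in a chain $w_{i'}$ with $i'<i$ follows by considering the complement relation $U^c$ and invoking Lemma~\ref{complement}, which swaps the roles of ``above'' and ``below'' in the chain structure.

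Next I would handle $z$ inside the same chain $w_i$. By Lemma~\ref{lem: trans} the chain is totally ordered by $U$ in the sense that for $r<s$ we have $(x_r,x_s)\in U$; combined with Lemma~\ref{lem:chain prop}(i) every pair in $w_i$ is related in at least one direction. If $z$ precedes the adjacent pair $x,y$ in $w_i$ then $(z,x),(z,y)\in U$; if $z$ follows them then $(x,z),(y,z)\in U$. In either case I still need to rule out, say, $(x,z)\in U$ while $(y,z)\notin U$ when $z$ comes after both: but if $(y,z)\notin U$ then by Lemma~\ref{lem:chain prop}(i) we'd need $(z,y)\in U$, and since $x,y,z$ appear in the order $x,y,\dots,z$ within the chain this gives a relation pointing ``backwards,'' which together with $(x,y),(y,x)\in U$ lets me reorder $w_i$ to the sequence $x,z,\dots$ or $z,y,\dots$ producing (after checking it is still a legitimate maximal chain, i.e. a maximal descending chain in $(\al,U)$) a configuration violating the uniqueness in Lemma~\ref{lem:chain prop}(ii) for the \emph{other} chains' interactions, or directly a longer chain. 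So symmetry of $z$'s relation to $x$ and to $y$ holds here too.

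The main obstacle will be the careful bookkeeping in the ``$z$ outside the chain, $z$ could be the exceptional element'' subcase: I need to argue rigorously that the freedom to permute $x$ and $y$ within their common maximal chain (justified by $(x,y),(y,x)\in U$ plus Lemma~\ref{lem: trans}) is compatible with keeping $w$ a maximal chain word, so that Lemma~\ref{lem:chain prop}(ii) can be applied to \emph{both} orderings, and the only consistent conclusion is that $x$ and $y$ behave identically toward $z$. Everything else is a direct reading-off from the already-established lemmas plus one appeal to the complementation symmetry of Lemma~\ref{complement} to cover chains on the other side.
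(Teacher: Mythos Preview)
Your approach has a real gap. The entire argument rests on the claim that $x$ and $y$ may be taken \emph{adjacent} in the chain $w_i$, and neither of your justifications establishes this. Saying ``any $b$ strictly between them would need $(x,b)\in U$ and $(b,y)\in U$, contradicting maximality'' does not produce a longer chain: those two relations are exactly what a descending chain through $x,b,y$ requires, so there is nothing to contradict. And Lemma~\ref{lem:chain prop}(ii) governs how letters in \emph{other} chains relate to $w_i$; it says nothing about the internal ordering of $w_i$. Without adjacency your ``swap $x$ and $y$ inside $w_i$'' move need not preserve property~\eqref{transitivity}, since an intermediate $b$ may satisfy $(x,b)\in U$ but not $(b,x)\in U$ (and likewise for $y$), so the reordered sequence is no longer a descending chain in the required sense. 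The same problem undermines your treatment of the ``$z$ in the same chain'' case: a backward relation $(z,y)\in U$ does not by itself license a reordering of $w_i$ into another maximal chain word. Finally, your appeal to Lemma~\ref{complement} to handle $z$ in a chain $w_{i'}$ with $i'<i$ does not do what you want: passing to $U^{c}$ produces an entirely new maximal chain word, not the same $w$ with the roles of ``left'' and ``right'' interchanged.

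The paper avoids the adjacency issue by splitting cases on whether $(y,z)\in U$, not on where $z$ sits. When $(y,z)\notin U$ (so, together with the contradiction hypothesis $(z,y)\notin U$, $y$ is unrelated to $z$), Lemma~\ref{lem:chain prop}(ii) forces $z$'s unique unrelated position in $w_i$ to be exactly $y$'s slot, and one writes down an explicit descending chain one step longer than the relevant $w_j$ (only consecutive relations are needed, and those are supplied by the threshold structure together with $(x,y),(y,x)\in U$ and the transitivity~\eqref{transitivity}). When $(y,z)\in U$, the paper genuinely passes to a maximal chain word for $U^{c}$ and runs a separate, more delicate placement argument there. If you want to salvage your organisation, the missing ingredient is a proof that $x$ and $y$ can be made adjacent in some maximal chain word satisfying~\eqref{transitivity}; but establishing that looks at least as hard as the lemma itself, and the paper's case split is the cleaner route.
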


\begin{proof} If $z=x$ then $\alpha_{x} >1$ and the claim follows from Lemma~\ref{lem: x,y underlined}. The same is true if $z=y$. So, suppose $z \neq x$, $z \neq y$. Because of symmetry, it suffices to prove 
\begin{align}
(z,x) \in U \implies (z,y) \in U \label{first}\\
(x,z) \in U \implies (y,z) \in U \label{second}
\end{align}
To see~\eqref{first}, suppose that $(z,x) \in U, (z,y) \notin U$. We consider two cases. 

\underline{\textbf{Case 1:}} $(y,z) \notin U$. Let $w = w_{t}w_{t-1}\cdots w_{1} \in \R(\al)$ be a maximal chain word that satisfies~\eqref{transitivity}. By Lemma~\ref{lem: same chain}, $x$ and $y$ are in the same chain $w_{i}$ of $w$. By Lemma~\ref{lem:chain prop}, $z$ is in a different chain $w_{j}$ and by Lemma~\ref{lem: same chain}, $(x,z) \notin U$. If $j > i$, notice that, by Lemma~\ref{lem:chain prop}, $x$ cannot precede $y$ in $w_{i}$, so $w_{i}$ must be of the form $w_{i}=b_{1}\cdots b_{k}y b_{k+1}\cdots b_{l}x b_{l+1}\cdots b_{m}$. Then $b_{1}\cdots b_{k}z b_{k+1}\cdots b_{l}xy b_{l+1}\cdots b_{m}$ is a descending chain longer than $w_{i}$. If $j <i$, then $w_{j} = b_{1}\cdots b_{k}z b_{k+1}\cdots b_{l}$. By part $(ii)$ of Lemma~\ref{lem:chain prop}, $(b_{k},x), (y,b_{k+1}) \in U$, which implies that $b_{1}\cdots b_{k} x y b_{k+1}\cdots b_{l}$ is a descending chain longer than $w_{j}$.

\underline{\textbf{Case 2:}} $(y,z) \in U$. By Lemma~\ref{complement}, $\maj'_{U^{c}}$ and $\inv'_{U^{c}}$ are equidistributed on $\R(\al)$. Let $w= w_{t}w_{t-1}\cdots w_{1} \in \R(\al)$ be a maximal chain word for $U^{c}$ that satisfies~\eqref{transitivity}. Suppose $x, y, z$ are in the chains $w_{i}, w_{j},w_{k}$, respectively. By Lemma~\ref{lem:chain prop}, $i \neq j$, $i \neq k$. If $i < j,k$ and $w_{i} = b_{1} \cdots b_{l} x b_{l+1} \cdots b_{m}$ then a different maximal chain word $w'$ could be constructed by taking the same chains $w_{1}, \ldots, w_{i-1}$ as in $w$  and replacing $w_{i}$ by $b_{1} \cdots b_{l} y b_{l+1} \cdots b_{m}$. Since $(z,y)\in U^{c}$, it follows from Lemma~\ref{lem:chain prop} that $z$ is not in relation $U^{c}$ with some $b_{r}$, $r \leq l$ and therefore $(z,x) \in U^{c}$, which contradicts $(z,x) \in U$. The similar argument holds if $j <i,k$.  If $k < i,j$ and $w_{k} = b_{1} \cdots b_{l} z b_{l+1} \cdots b_{m}$ then $y$ is not in relation $U^{c}$ with some $b_{r}$, $r > l$, and a different maximal chain word for $U^{c}$ could be formed by replacing $w_{k}$ with $b_{1} \cdots b_{l} z b_{l+1} \cdots b_{r-1} y b_{r+1} \cdots b_{m}$. Part $(ii)$ of Lemma~\ref{lem:chain prop} now implies that $(z,x) \in U^{c}$, which contradicts $(z,x) \in U$. Finally, if $j=k <i$, then since $(z,x) \notin U^{c}$ and $(x,y),(y,x) \notin U^{c}$, Lemma~\ref{lem:chain prop} implies that $(x,z) \in U^{c}$ and $y$ precedes $z$ in $w_{j}$. Therefore, $(y,z) \in U^{c}$, which contradicts $(y,z) \notin U$. 

The implication~\eqref{second} can be proved by considering completely analogous cases, so we omit it here. 
\end{proof}

For a relation $U$ on $X$, call $S(U) = \{ (x,y) \in X \times X : (x,y), (y,x) \in U\}$, the {\it symmetric} part of $U$, and call $A(U) = U \setminus S(U)$  the {\it asymmetric} part of $U$.   Let $X_U = \{ x \in X : (x,y) \in S(U)$ for some $y \in X \}$.  

 \begin{lemma} \label{lem: eq rel}
If $\majup$ and $\invup$ are equidistributed over a rearrangement class $\R(\al)$, then $S(U) \cup \{(x,x): x \in X_{U}, \alpha_{x}=1\}$ is an equivalence relation on $X_U \times X_U$.
\end{lemma}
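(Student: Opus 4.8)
The plan is to show that the relation $R := S(U) \cup \{(x,x) : x \in X_U, \alpha_x = 1\}$ is reflexive, symmetric, and transitive on $X_U \times X_U$. Reflexivity on $X_U$ is the only subtle part of these three: if $x \in X_U$ with $\alpha_x > 1$, then $(x,y),(y,x) \in U$ for some $y \neq x$, so Lemma~\ref{lem: x,y underlined} gives $(x,x) \in U$, hence $(x,x) \in S(U) \subseteq R$; if $\alpha_x = 1$ we added $(x,x)$ by hand. Symmetry is immediate since $S(U)$ is symmetric by definition and the diagonal pairs are symmetric. So the real content is transitivity.

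For transitivity, suppose $(x,y) \in R$ and $(y,z) \in R$ with $x,y,z \in X_U$; I want $(x,z) \in R$. If any two of $x,y,z$ coincide the claim is easy: e.g.\ if $x = z$ then I need $(x,x) \in R$, which holds by reflexivity just established; if $x = y$ or $y = z$ the conclusion is one of the hypotheses. So assume $x,y,z$ are pairwise distinct, and in particular $(x,y),(y,x),(y,z),(z,y) \in U$ (the diagonal pairs we added only involve $\alpha = 1$ singletons and don't help here, but since $x \neq y$ the pair $(x,y) \in R$ forces $(x,y),(y,x) \in S(U) \subseteq U$, and similarly for $(y,z)$). Now apply Lemma~\ref{lem: underlined blocks 13} to the pair $y, z$ (which satisfies $(y,z),(z,y) \in U$): taking the ``witness'' element to be $x$, the lemma tells us $(x,y) \in U \iff (x,z) \in U$ and $(y,x) \in U \iff (z,x) \in U$. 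Since $(x,y),(y,x) \in U$, we conclude $(x,z),(z,x) \in U$, i.e.\ $(x,z) \in S(U) \subseteq R$. This completes transitivity.

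Putting the pieces together: $R$ is reflexive on $X_U$ (Lemma~\ref{lem: x,y underlined} for the $\alpha_x > 1$ case, by fiat otherwise), symmetric (by construction), and transitive (Lemma~\ref{lem: underlined blocks 13} applied to one of the given symmetric pairs with the third element as witness), so $R$ is an equivalence relation on $X_U \times X_U$.

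The main obstacle — really the only place any work happens — is the transitivity step, and the key realization is that Lemma~\ref{lem: underlined blocks 13} is exactly the tool for it: it says that two elements joined by a symmetric pair are ``indistinguishable'' with respect to how every other element relates to them, which is precisely the substitution property that upgrades a symmetric relation to a transitive one. One should be careful to check the degenerate cases where the three elements are not distinct, since Lemma~\ref{lem: underlined blocks 13} is stated for distinct $x,y$ and for witnesses $z \notin \{x,y\}$ — but as noted those cases reduce immediately to reflexivity or to a hypothesis.
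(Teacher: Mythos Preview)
Your proof is correct and follows essentially the same approach as the paper's: reflexivity via Lemma~\ref{lem: x,y underlined}, symmetry by construction, and transitivity via Lemma~\ref{lem: underlined blocks 13}. You are in fact more careful than the paper about the degenerate cases in the transitivity step (when two of $x,y,z$ coincide) and about noting that non-diagonal pairs in $R$ must lie in $S(U)$; the paper simply writes ``assume $(x,y),(y,z)\in S(U)$'' and invokes Lemma~\ref{lem: underlined blocks 13} without comment. One very minor quibble: in your reflexivity argument you assert that for $x\in X_U$ with $\alpha_x>1$ there is some $y\neq x$ with $(x,y),(y,x)\in U$, but the definition of $X_U$ only guarantees some $y$ (possibly $y=x$); of course if $y=x$ then $(x,x)\in S(U)$ directly, so the conclusion stands.
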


\begin{proof}
Let $x\in X_U$ and $y\in X$ such that $(x,y) \in U$.  If $y = x$ then we have $(x,x) \in U$.  If $y\neq x$ then we have $(x,y),(y,x) \in U$ and thus, if $\alpha_{x}>1$ by Lemma \ref{lem: x,y underlined} we have $(x,x)\in U$. $S(U)$ is symmetric by definition because $(x,y) \in S(U)$ implies $(y,x) \in S(U)$. Now consider $x,y,z \in X_U$ and assume $(x,y), (y,z) \in S(U)$.  Then by definition of $S(U)$, $(y,x), (z,y) \in S(U)$   and Lemma \ref{lem: underlined blocks 13} implies $(x,z) \in S(U)$.
\end{proof}

Consequently, $X_{U}$ can be partitioned into blocks $B_1, \ldots, B_l$ such that \[S(U) \cup \{(x,x): x \in X_{U}, \alpha_{x}=1\} = (B_1\times B_1) \cup (B_2 \times B_2) \cup \dots \cup (B_l \times B_l).\]

\begin{lemma} \label{lem: smallest block}
Suppose that $\majup$ and $\invup$ are equidistributed over a rearrangement class $\R(\al)$.  Then either 
there is a block $B$ of $X_{U}$ such that 
\[ \text{ for all }x \in B\text{ and all } y\in X \setminus B\text{ we have } (x,y) \notin U,\]
or there is an element 
\[x\in X \setminus X_U\text{ such that for all }y\in X \backslash \{x\} \text { we have }(x,y) \notin U.\]
\end{lemma}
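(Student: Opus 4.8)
The plan is to exploit the maximal chain structure heavily. Consider a maximal chain word $w = w_k w_{k-1} \cdots w_1 \in \R(\al)$ satisfying the transitivity property~\eqref{transitivity} from Lemma~\ref{lem: trans}, so that within each maximal descending chain $w_j$, the letters are totally ordered by $U$ (i.e. if $x$ precedes $y$ in $w_j$ then $(x,y) \in U$). By Lemma~\ref{lem: same chain}, if $(x,y) \in S(U)$ then all copies of $x$ and $y$ lie in a common chain $w_i$; hence each equivalence block $B$ of $X_U$ (in the sense established after Lemma~\ref{lem: eq rel}) is contained in a single chain. The idea is to look at the \emph{last} chain $w_1$ — the rightmost one — and extract from it a ``minimal'' element or block with respect to $U$.

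First I would analyze the rightmost chain $w_1 = x_{i_{k-1}+1} \cdots x_{i_k}$. Its last letter $x_{i_k}$ is the smallest element of $w_1$ in the $U$-order restricted to that chain. I would consider two cases according to whether $x_{i_k}$ belongs to $X_U$ or not. If $x_{i_k} \notin X_U$, I would argue that $x := x_{i_k}$ works for the second alternative: one must show $(x,y) \notin U$ for every $y \neq x$. For $y$ in $w_1$, transitivity within the chain together with minimality of $x_{i_k}$ and the fact that $x \notin X_U$ (so $(x,y),(y,x)$ cannot both be in $U$) forces $(x,y) \notin U$. For $y$ in a chain $w_i$ with $i > 1$, one uses part~\eqref{it2} of Lemma~\ref{lem:chain prop}: $y$ is in relation with exactly $i_1 - 1$ (i.e. all but one) of the letters of $w_1$, and the unique non-related one together with the prescribed pattern $(x_s, y) \in U$ for $s$ below the gap and $(y, x_s) \in U$ for $s$ above the gap; since $x = x_{i_k}$ is the last letter, being in relation to $x$ at all would mean $(y,x) \in U$, not $(x,y) \in U$ — so $(x,y) \notin U$ regardless. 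If $x_{i_k} \in X_U$, let $B$ be its equivalence block; $B$ sits inside $w_1$ and, I claim, occupies a terminal segment of $w_1$ (since $B$ is downward-closed under the chain's $U$-order: any letter below a letter of $B$ would also have to be $U$-related both ways to it by Lemma~\ref{lem: underlined blocks 13}-type reasoning). Then I must show $(x,y) \notin U$ for $x \in B$, $y \notin B$. For $y \in w_1 \setminus B$: $y$ precedes all of $B$, so $(y,x) \in U$; if also $(x,y) \in U$ then $x,y$ would be in the same block, contradiction; so $(x,y) \notin U$. For $y$ in a later chain $w_i$: again invoke Lemma~\ref{lem:chain prop}\eqref{it2} — $y$ is unrelated to exactly one letter of $w_1$, and for the letters it is related to and which lie at or after that gap we have $(y, x_s) \in U$; since all of $B$ lies at the tail of $w_1$, each $x \in B$ that is related to $y$ satisfies $(y,x) \in U$, hence $(x,y) \notin U$ — one only needs to handle the single unrelated letter, which if it lies in $B$ is automatically fine, and if it lies outside $B$ then every $x \in B$ is related and of the $(y,x)$-type.

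The main obstacle I anticipate is the bookkeeping in the case $x_{i_k} \in X_U$: I need to be careful that the equivalence block $B$ really does form a \emph{contiguous terminal block} of $w_1$ and that the gap position from Lemma~\ref{lem:chain prop}\eqref{it2} interacts correctly with $B$ — in particular ruling out the possibility that a later-chain letter $y$ is $U$-related to some but not all of $B$ in a way inconsistent with the conclusion. This likely needs the symmetry-propagation Lemma~\ref{lem: underlined blocks 13}: if $y$ were related (say) to some $x \in B$ via $(x,y) \in U$ but not to some other $x' \in B$, then applying Lemma~\ref{lem: underlined blocks 13} to the pair $(x,x') \in S(U)$ would force $(x', y) \in U$ as well, a contradiction with the uniqueness of the gap in Lemma~\ref{lem:chain prop}\eqref{it2} unless the gap is exactly at $B$ — and then one re-runs the argument. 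I would also need the degenerate subcase $|w_1| = 1$, where $w_1$ is a single letter not related to itself, handled directly by the second alternative. Once the block/element is identified, verifying the ``no outgoing edges'' condition against both same-chain and later-chain letters is routine given the three structural lemmas, so I would present that part compactly.
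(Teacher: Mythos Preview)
Your approach is correct and genuinely different from the paper's. The paper argues by contradiction: assuming neither alternative holds, it builds (using Lemma~\ref{lem: underlined blocks 13}) an infinite sequence $x_0, x_2, x_4, \ldots$ in which each consecutive pair lies in the asymmetric part of $U$; finiteness of $X$ forces an asymmetric cycle $y_1 >_U y_2 >_U \cdots >_U y_l >_U y_1$ of length $l \geq 3$, and then one argues (via Lemma~\ref{lem:chain prop} and Lemma~\ref{lem: trans}) that all the $y_i$ can be absorbed into a single maximal chain, contradicting the transitive ordering there. Your route is instead constructive: you read off the witness directly from the terminal letter of the rightmost chain $w_1$ in a maximal chain word, splitting on whether that letter lies in $X_U$.

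Both proofs rest on the same structural ingredients (Lemmas~\ref{lem:chain prop}, \ref{lem: trans}, \ref{lem: same chain}, \ref{lem: underlined blocks 13}). The paper's cycle argument is more indirect but has the virtue of never needing to locate a specific element inside a specific chain; your argument is shorter and actually exhibits the minimal block or element, at the cost of the bookkeeping you correctly flag. Two small points to clean up when you write it out: your indexing of $w_1$ as $x_{i_{k-1}+1}\cdots x_{i_k}$ is off (in the paper's convention $w_1 = x_1\cdots x_{i_1}$, so the tail letter is $x_{i_1}$); and in Case~1 the sentence ``being in relation to $x$ at all would mean $(y,x)\in U$, not $(x,y)\in U$'' overstates what Lemma~\ref{lem:chain prop}\eqref{it2} gives --- it only supplies $(y,x)\in U$, and you must then invoke $x\notin X_U$ to exclude $(x,y)\in U$, as you do earlier. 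In Case~2, your anticipated obstacle (the gap of Lemma~\ref{lem:chain prop}\eqref{it2} landing inside the $B$-segment) is resolved exactly as you sketch: Lemma~\ref{lem: underlined blocks 13} forces every element of $B$ to be unrelated to $y$, which collides with the uniqueness of the gap unless the $B$-segment is a single letter, and that residual case is immediate.
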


\begin{proof}
Suppose that the lemma does not hold.  In other words, assume that $\majup$ and $\invup$ are equidistributed over a rearrangement class $\R(\al)$, but for all blocks $B_i$ of $X_U$ there exists a $x\in B_i$ and $y \in X \setminus B_i$ such that $(x,y) \in U$, and for all $x \in X \setminus X_U$ there exists a $y\in X \setminus \{x\}$ such that $(x,y) \in U$.

Consider $x_0 \in X$.  If $x_{0} \in B_{i_0}$ for some $B_{i_0} \subset X_U$ there exists a $x_1 \in B_{i_0}$ and $x_2 \in X\setminus B_{i_0}$ such that $(x_0 ,x_1),(x_1,x_0) \in U$, and $(x_1,x_2) \in U$. Lemma \ref{lem: underlined blocks 13} implies $(x_0,x_2) \in U$, and $(x_2,x_0) \notin U$ because if so $x_2 \in B_{i_0}$. Note that if we began with $x_0 \notin X_U$ our assumptions would still give an element $x_2 \in X\setminus \{x\}$ such that $(x_0,x_2) \in U$, and $(x_2,x_0) \notin U$ because $x_{0} \notin X_U$. Now there are two cases to consider.

\underline{\textbf{Case 1:}} $x_2 \in B_{i_1}$ for some $B_{i_1} \subset X_U$ and $B_{i_1} \neq B_{i_0}$.  Then there exists a $x_3 \in B_{i_1}$ and $x_4 \notin B_{i_1}$ such that $(x_{4},x_3) \in U$  and, by Lemma \ref{lem: underlined blocks 13}, $(x_2, x_{4}) \in U$ and $(x_4, x_2) \notin U$.

\underline{\textbf{Case 2:}} $x_{2} \notin X_U$, and then there exists a $x_4 \in X \setminus \{x_2\}$ such that $(x_2, x_{4}) \in U$ and $(x_4, x_2) \notin U$.

Continuing this process we can build a sequence $x_0, x_2, x_4, x_6, \ldots$ with the properties $x_{0} >_{U} x_{2} >_{U} x_{4} >_{U} x_{6} >_{U} \cdots$ and $x_{0} \not<_{U} x_{2} \not<_{U} x_{4} \not<_{U} x_{6} \not<_{U} \cdots$   with $x_{2i}  \neq x_{2i+2}$ for all $i$.

The set $X$ is finite and thus this sequence can not be infinite with distinct terms.  Therefore, with relabeling there is a finite sequence $y_1, y_2, y_3, \ldots, y_{l+1}$ such that
\begin{itemize}
\item $l \geq 2$
\item all $y_i$'s are distinct for $i=1,2,\ldots,l$
\item $y_{1}>_{U} y_{2} >_{U} (y_{3} >_{U} y_{4} >_{U} \cdots >_{U} y_{l} >_{U}y_{l+1} = y_{1} $
\item $y_{1} \not<_{U} y_{2} \not<_{U} y_{3} \not<_{U} y_{4} \not<_{U} \cdots \not<_{U} y_{l} \not<_{U} y_{l+1} = y_{1}$
\end{itemize}

If $l =2$ then we have $y_{1} <_{U} y_{2} <_{U} y_{1} $ and $y_{1} \not<_{U} y_{2} \not< _{U}y_{1} $  which is a contradiction and hence $l \geq 3$.

Let $w \in \R(\al)$ be a maximal chain word for $U$. If all $y_{1}, \ldots, y_{l}$ appear in the same chain $w_{i}$, then by Lemma~\ref{lem: trans}, they can be relabeled to give  a sequence $z_{1}, \ldots, z_{l}$ such that $(z_{r}, z_{s}) \in U$ for all $1 \leq r < s \leq l$. If $z_{l} = y_{i}$, then this means that $y_{i+1}>_{U} y_{i} $, which is a contradiction. If not all all $y_{1}, \ldots, y_{l}$ appear in the same chain let $y_{j}$ be the one that appears in the rightmost chain of $w$. Then either $y_{j+1}$ is already in the same chain or its not related to an element to the right of $y_{j}$. In the latter case, another maximal chain word can be constructed in which $y_{j}$ and $y_{j+1}$ are in the same chain, while the other $y_{i}$'s are either in the same chain or in chains to the left. Continuing this argument, we see that we can construct a maximal chain word in which all $y_{1}, \ldots, y_{l}$ are in the same chain, which as we saw before is impossible.

\end{proof}

Let $C = \{ x\in X\setminus X_U \colon \alpha_{x}>1, (x,y) \notin U,  \forall y \in X \text{ or } \alpha_{x}=1, (x,y) \notin U,  \forall y \in X \setminus \{x\}\}.$

\begin{lemma} \label{lem: block structure}
Suppose that $\majup$ and $\invup$ are equidistributed over a rearrangement class $\R(\al)$.  If $C$ is nonempty then 
\[ \text{for all }y \in X \setminus C \text{ and for all }x \in C \text{ we have }(y,x) \in U.\]
If $C$ is empty and $B$ is the block defined in Lemma \ref{lem: smallest block} then 
\[ \text{for all }y \in X \setminus B \text{ and for all }x \in B\text{ we have }(y,x) \in U.\]
\end{lemma}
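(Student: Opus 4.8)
The plan is to handle the two cases ($C$ nonempty; $C$ empty with the distinguished block $B$) by essentially the same argument, working with a maximal chain word $w = w_t w_{t-1}\cdots w_1 \in \R(\al)$ that satisfies the transitivity property~\eqref{transitivity} of Lemma~\ref{lem: trans}. Fix $x \in C$ (resp.\ $x \in B$) and $y \in X \setminus C$ (resp.\ $y \in X \setminus B$), and suppose toward a contradiction that $(y,x) \notin U$. First I would observe that the defining properties of $C$ and of $B$ (via Lemma~\ref{lem: smallest block}) guarantee $(x,z) \notin U$ for every $z$ other than possibly $x$ itself: if $x \in C$ this is immediate from the definition of $C$, and if $x \in B$ it is the conclusion of Lemma~\ref{lem: smallest block}. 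In particular $x$ creates no graphical inversions to its right except with copies of itself, so in a maximal chain word all the copies of $x$ lie in the rightmost chain $w_1$ (a descending chain consisting only of $x$'s if $\alpha_x>1$, or a singleton chain if $\alpha_x=1$), because any $x$ placed in an earlier chain $w_i$ with $i>1$ would fail to be related to the elements to its right in $w_i$, contradicting Lemma~\ref{lem:chain prop}(i).

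Next I would locate $y$. Since $(y,x)\notin U$ and (by the previous paragraph) $(x,y)\notin U$ when $y\neq x$, Lemma~\ref{lem:chain prop} forces $y$ to be in a chain strictly to the left of $w_1$. But then consider moving a copy of $x$ out of $w_1$ and inserting it into $y$'s chain $w_j$, or more directly: the chain $w_1$ consisting of $x$'s could be lengthened, or a new maximal chain word could be built, using the fact that $x$ is unrelated (in both directions) to $y$ and indeed to every letter outside its own block/singleton. The key point is a counting/maximality contradiction: because $x$ is unrelated to so many letters, the "peeling off maximal chains" process that produced $w$ could not have been optimal — one could have formed a longer descending chain, or equivalently the bound~\eqref{lm2.1} would be strict, contradicting equidistribution. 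Concretely, I expect to argue that if $(y,x)\notin U$ then in the directed graph $(\al,U)$ the element $x$ is a "sink" relative to everything, so a maximal descending chain through $y$ together with $x$ appended would be strictly longer than the chain $x$ currently sits in — contradicting the definition of a maximal chain word — unless $y$ is already related \emph{into} $x$, i.e.\ $(y,x)\in U$.

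The main obstacle, and the step I would spend the most care on, is making the maximality/chain-surgery argument fully rigorous in the presence of the symmetric blocks: when $x\in B$ with $|B|>1$, the copies of elements of $B$ all sit together in $w_1$ by Lemma~\ref{lem: same chain}, and I must show that an arbitrary $y\in X\setminus B$ with $(y,x)\notin U$ lets me build a strictly longer chain (or reconfigure $w$) rather than merely an equally long one. This is where Lemma~\ref{lem: underlined blocks 13} becomes essential: $(y,x)\notin U$ would propagate to $(y,x')\notin U$ for all $x'\in B$ and, combined with the conclusion of Lemma~\ref{lem: smallest block} that no element of $B$ points outside $B$, would make the whole block $B$ detachable in a way that violates either maximality of the chain $w_1$ or the equality in~\eqref{lm2.1}. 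I would also need to separately dispose of the degenerate sub-case $y\in X\setminus X_U$ in the $C$-nonempty statement, but there $(y,x)\notin U$ together with $x\in C$ (hence $(x,y)\notin U$) again isolates $x$ and contradicts maximality by the same mechanism. Once the contradiction is reached, $(y,x)\in U$ for all such $y$, which is exactly the claim.
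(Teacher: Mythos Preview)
Your proposal has a genuine gap. The claim that ``in a maximal chain word all the copies of $x$ lie in the rightmost chain $w_1$'' is simply false. The construction of a maximal chain word peels off a \emph{longest} descending chain first; the fact that $x$ has no outgoing edges in $U$ only forces $x$ to sit at the \emph{end} of whichever chain it belongs to, not to be in $w_1$. For a small counterexample take $X=\{1,2,3\}$, $\al=(1,1,1)$, $U=\{(3,2),(3,1)\}$ (bipartitional, so equidistribution holds): a maximal chain word is $w=1\,|\,32$, and the element $1\in C$ sits in $w_2$, not $w_1$. Once this claim fails, the rest of your chain-surgery outline --- locating $y$ strictly left of $w_1$, then ``appending $x$'' to $y$'s chain to get something strictly longer --- no longer produces a contradiction, and indeed you already flag this step as the main obstacle without resolving it.

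The paper's argument avoids maximal chain words entirely here and is much shorter. The key observation you are missing is to pass to the complement: from $(y,x)\notin U$ and $(x,y)\notin U$ (the latter since $x\in C$ or $x\in B$) one has $(x,y),(y,x)\in U^{c}$, and by Lemma~\ref{complement} equidistribution also holds for $U^{c}$. Lemma~\ref{lem: underlined blocks 13} applied to $U^{c}$ then gives $(x,z)\in U^{c}\Longleftrightarrow (y,z)\in U^{c}$ for every relevant $z$. Now choose $z$ witnessing $y\notin C$ (resp.\ $y\notin B$), so $(y,z)\in U$; but $(x,z)\notin U$ because $x\in C$ (resp.\ by Lemma~\ref{lem: smallest block}), i.e.\ $(x,z)\in U^{c}$ while $(y,z)\notin U^{c}$ --- contradiction. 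The whole proof is two lines once you see the complement trick; trying to redo the chain-surgery from scratch is both harder and, as written, incorrect.
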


\begin{proof}
Suppose $C$ is nonempty, and that the claim does not hold.  In other words assume  that  there exists a $y \in X \setminus C$ and $x \in C$ such that $(y,x) \notin U$.  Now $y \notin C$ so there exists a $z \in X$ such that $(y,z) \in U$. Notice that we may have $y=z$ if $y\in X_U$, but then $\alpha_{y} >1$, and $z \neq x$ by assumption. Now we have $(y,z) \in U$, $(y,x) \notin U$, and $(x,y), (x,z) \notin U$ since $x\in C$. Since $(x,y), (y,x) \notin U$, and $(x,z) \notin U$, Lemma~\ref{lem: underlined blocks 13} applied to $U^{c}$ yields $(y,z) \notin U$, which is a contradiction. 

Now suppose $C$ is empty, $B$ is the block defined in Lemma \ref{lem: smallest block}, and the claim does not hold.  In other words, there exists a $y \in X \setminus B$ and $x \in B$ such that $(y,x) \notin U$.  Since $y\notin B$, and $C$ is nonempty there must be an element $z$ such that $(y,z) \in U$.  Now we have $(y,z) \in U$, $(y,x) \notin U$, and $(x,y), (x,z) \notin U$ since $x\in B$.  Therefore, the same argument as above gives a contradiction.
\end{proof}

\begin{proof}[\textbf{Proof of Theorem~\ref{thm: main inv} }]

Theorem \ref{thm: main inv} can be proved using induction on the size of the set $X$. Suppose first that $C \neq \emptyset$. Then, by Lemma~\ref{lem: block structure}, $C \times X = \emptyset$  and $(X \setminus C) \times C \subset U$. Consider $\invup$ and $\majup$ over the rearrangement class $\R(\al')$ of the permutations of the multiset $\{x^{\alpha_{x}} \colon x \in X\setminus C\}$. Inserting the elements from $C$ in all possible ways among the letters of a word $w' \in \R(\al')$ results in a set of words $S(w') \subset \R(\al)$. It is not hard to see that as $w$ ranges over $S(w')$, the difference $\invup w - \invup w'$ ranges over the multiset $\{i_{1}+i_{2}+ \cdots +i_{r} \colon 0 \leq i_{1} \leq i_{2} \leq \cdots \leq i_{l} \leq s\}$ where $r = |\alpha(C)|$ and $s = |\alpha(X\setminus C)|$. The same is true for the difference $\majup w - \majup w'$. This is less obvious but follows from a similar property of the classical major index for words (see e.g.~\cite[Lemma 4.6]{chen2010major} for a proof). Let $U_1 = U \cap ((X\setminus C ) \times (X \setminus C))$ be the restriction of $U$ on $X\setminus C$. For $w \in \R(\al')$, $\invup w = \inv'_{U_{1}} w$ and $\majup w = \maj'_{U_{1}} w$.  So,
\begin{align*}
\sum_{w \in \R(\al)} q^{\invup w} &= {|\alpha| \brack |C|} \binom{|C|}{\al(C)} \sum_{w \in \R(\al')} q^{\inv'_{U_{1}} w}\\
\sum_{w \in \R(\al)} q^{\majup w} &= {|\alpha| \brack |C|} \binom{|C|}{\al(C)}  \sum_{w \in \R(\al')} q^{\maj'_{U_{1}} w}.
\end{align*} Thus if $\invup$ and $\sorup$ are equidistributed on $\R(\al)$ then $\inv'_{U_{1}}$ and $\maj'_{U_{1}}$ are equidistributed on $\R(\al')$. By the induction hypothesis, $U_{1}$ is essentially bipartitional relative to $\al'$ and thus $U$ is essentially bipartitional relative to $\al$ with one more non-underlined block $C$.

In the case when $C = \emptyset$ there is a block $B$ such that $B \times (X \setminus B)$ is empty and $X \times B \subset U$. Then we consider the relation $U_1 = U \cap ((X\setminus B) \times (X \setminus B))$ on $X \setminus B$. Similar reasoning as above yields
\begin{align*}
\sum_{w \in \R(\al)} q^{\invup w} &= {|\alpha| \brack |B|} \binom{|B|}{\al(B)} q^{\binom{|B|}{2}}\sum_{w \in \R(\al')} q^{\inv'_{U_{1}} w}\\
\sum_{w \in \R(\al)} q^{\majup w} &= {|\alpha| \brack |B|} \binom{|B|}{\al(B)} q^{\binom{|B|}{2}} \sum_{w \in \R(\al')} q^{\maj'_{U_{1}} w}.
\end{align*} 
So, $U_{1}$ is essentially bipartitional relative to $\al'$ and thus $U$ is essentially bipartitional relative to $\al$ with one more underlined block $B$.
\end{proof}


\section{Graphical Sorting Index}\label{sec: sor}

In this section we will prove Theorem~\ref{thm: main sor}. The ``if'' part follows from the following proposition and~\eqref{fzgf}, while the ``only if'' part follows from Lemma~\ref{lem: natural rels} and Lemma~\ref{lem: max block}. 

\begin{proposition} \label{lem: gen function}

If the relation $U$ satisfies the properties of Theorem \ref{thm: main sor} and has blocks $B_1, \ldots , B_k$ then

$$ \sum_{w \in \R(\al)} q^{\sorup w}  = {|\alpha| \brack m_1,\ldots, m_k}\prod_{j=1}^k {m_j \choose \alpha(B_j)}.$$
\end{proposition}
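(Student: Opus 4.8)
The plan is to compute $\sum_{w \in \R(\al)} q^{\sorup w}$ directly from the definition of $\sorup$, by analyzing what happens in a single "round" of the sorting algorithm — the round that processes all copies of the largest letter $n$ — and then inducting on the number of letters. Write $n = \max X$; since $U$ is bipartitional with no underlined blocks and $(x,y)\in U \Rightarrow x>y$, the letter $n$ lies in the last block $B_k$, and every $y<n$ satisfies $(n,y)\in U$ iff $y$ lies in a block $B_j$ with $j<k$, i.e. iff $y \notin B_k$. Let $p = \alpha_n$ and let $q_0$ be the number of letters of $w$ equal to some element of $B_k \setminus \{n\}$; so the letters that $n$ "counts when it jumps over them" are exactly the $|\al| - p - q_0$ letters lying in blocks $B_1,\dots,B_{k-1}$.

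First I would show that when the algorithm processes the $p$ copies of $n$ (placing them, in order, into the last $p$ positions), the contribution to $\sorup w$ is $\sum_{r=1}^{p}\bigl(\text{number of letters not in }B_k\text{ strictly to the right of the }r\text{-th processed }n\bigr)$, and that, as $w$ ranges over $\R(\al)$ with the positions of the non-$n$ letters held in a fixed relative order, this quantity ranges exactly over the "staircase" multiset $\{\, i_1 + \cdots + i_p : 0 \le i_1 \le \cdots \le i_p \le |\al|-p \,\}$ — EXCEPT that the copies of letters in $B_k\setminus\{n\}$ are never counted, so in fact the relevant bound is $|\al|-p-q_0$ shifted appropriately. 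The cleanest way to package this: removing the $n$'s from $w$ gives a word $w'$ in $\R(\al')$ with $\al' = \al(B_1)\cdots\al(B_{k-1})\,\al(B_k\setminus\{n\})$, and $\sorup w = \sor'_{U_1} w' + (\text{insertion contribution})$, where $U_1$ is the restriction of $U$ to $X\setminus\{n\}$ (which again satisfies the hypotheses of Theorem~\ref{thm: main sor}, now relative to $\al'$). The key combinatorial input is that the generating function for the insertion contribution, over all ways of interleaving $p$ identical $n$'s into a word of length $|\al|-p$ in which $|\al|-p-q_0$ of the slots are "counted" and $q_0$ are "free", factors as $q$-binomial-like terms; I would isolate this as the statement that it equals $\dfrac{{|\al| \brack p}}{{|\al|-q_0 \brack p}}\cdot{|\al|-q_0 \brack p}$ — or more honestly, derive it from the standard fact that $\sum q^{i_1+\cdots+i_p}$ over $0\le i_1\le\cdots\le i_p\le N$ equals ${N+p \brack p}$, combined with the fact that the $q_0$ free letters contribute a factor that telescopes against the multinomial.

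The induction then runs as follows. By induction $\sum_{w'\in\R(\al')} q^{\sor'_{U_1} w'} = {|\al|-p \brack m_1,\dots,m_{k-1},m_k-1}\prod_{j=1}^{k-1}{m_j\choose\al(B_j)}\cdot{m_k-1\choose\al(B_k\setminus\{n\})}$. Multiplying by the insertion generating function and simplifying the product of $q$-binomials using $\dfrac{{|\al|\brack p}}{{|\al|-p\brack \cdot}}$-type identities and ${m_k\choose \al(B_k)} = {m_k \choose m_k - 1, 1, \dots}\cdots$ — i.e. reassembling $m_k-1$ into $m_k$ and folding $p=\alpha_n$ back into the last multinomial — should yield exactly ${|\al|\brack m_1,\dots,m_k}\prod_{j=1}^k {m_j\choose\al(B_j)}$. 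Note that property~(4) of Theorem~\ref{thm: main sor} is what guarantees the recursion stays inside the hypothesis class: when $|B_i|=2$ for $i<k$ we have $\alpha_{\max B_i}=1$, so after stripping $n$ from $B_k$ and eventually reaching those blocks, the "free vs counted" bookkeeping behaves, and property~(3) keeps the small blocks from introducing extra $q$-powers the way underlined blocks would.

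The main obstacle I anticipate is the insertion-generating-function computation — specifically, tracking precisely which letters get counted when an $n$ jumps over them (only those in earlier blocks, not those in $B_k\setminus\{n\}$) and showing that the resulting restricted sum over interleavings still factors cleanly as a ratio of $q$-binomial coefficients rather than producing an intractable sum. I would handle this by first doing the case $B_k = \{n\}$ (so $q_0 = 0$ and the classical $\sum q^{i_1+\cdots+i_p} = {|\al|\brack p}$ applies verbatim), and then reducing the general case to it by observing that a letter $y\in B_k\setminus\{n\}$ behaves, for the purpose of the $n$-round, exactly like a letter that is "transparent" to $n$, and grouping the interleaving count accordingly; the $q_0$ transparent letters contribute a plain binomial $\binom{\cdot}{\cdot}$ (no $q$) that merges into the multinomial, consistent with the absence of any $q^{\binom{m_j}{2}}$ factor in the claimed formula.
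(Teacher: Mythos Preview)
Your plan has two concrete problems, one cosmetic and one substantive.

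First, the block indexing is reversed. By the definition of a bipartitional relation, $(x,y)\in U$ when $x\in B_i$, $y\in B_j$, $i<j$; combined with $(x,y)\in U\Rightarrow x>y$ this forces $n\in B_1$, not $n\in B_k$. So the ``transparent'' letters for the $n$-round are those of $B_1\setminus\{n\}$, and it is $B_1$ (not $B_k$) whose size is bounded by $2$. This is only a relabeling issue, but it changes which hypotheses you actually get to use.

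The substantive gap is in the factorization step. You write $\sorup w=\sor'_{U_1} w'+(\text{insertion contribution})$ with $w'$ the word obtained by \emph{deleting} the $n$'s. That identity is false: after the swap that places $n$ in the last slot, the remaining prefix is not the deletion of $n$ from $w$ (the letter formerly in the last slot has moved). If instead you take $w'$ to be the prefix \emph{after} the sorting swap, the identity holds, but then your hoped-for product formula fails: for fixed $w'$ the generating function $\sum_{w\mapsto w'} q^{\text{contribution}}$ genuinely depends on where the transparent letters (the copies of $n-1$, when $|B_1|=2$) sit inside $w'$. For instance, with $|\al|=4$, $B_1=\{n-1,n\}$, $\alpha_n=\alpha_{n-1}=1$, the three words $w'=(n{-}1)ab$, $a(n{-}1)b$, $ab(n{-}1)$ give insertion generating functions $1+q+2q^2$, $1+2q+q^2$, $2+q+q^2$ respectively. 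So you cannot peel off a single $q$-binomial factor after one letter; the ``transparent letters contribute a plain binomial that merges into the multinomial'' step does not go through as stated.

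The fix is to induct on \emph{blocks} rather than on letters: sort all of $B_1$ in one round. Then every element being sorted counts exactly the non-$B_1$ letters it jumps over, the contributions from the $m_1$ elements of $B_1$ form a partition with parts bounded by $m_2+\cdots+m_k$, and the only residual datum (when $|B_1|=2$, forcing $\alpha_n=1$) is the position of $n$ among the $B_1$-letters, which supplies the ordinary binomial $\binom{m_1}{\al(B_1)}=m_1$. This is exactly what the paper does: it encodes a word by a ``$\bcode$'' consisting of one such partition per block together with the integers $p_j$, checks that this is a bijection onto the obvious product set, and reads off the generating function from $\sum_{n\ge 0}p(M,m_j,n)q^n={M+m_j\brack m_j}$. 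Your induction-on-letters plan would recover this only after you realize that the $n$-step and the $(n{-}1)$-step cannot be separated when they share a block.
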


\begin{proof}
We will prove the statement using a $\bcode$ for the words in $\R(\al)$ that we define. Let $w=x_1x_2\dots x_l \in \R(\al)$.   $\bcode{w}$ is a pair of two sequences: a sequence of partitions and a sequence of nonnegative integers. Precisely, we define $\bcode{w}$ to be

\[\left((b_{1,1} \geq  b_{1,2} \geq \ldots \geq b_{1, m_1}; b_{2,1} \geq b_{2,2} \geq \dots \geq b_{2, m_2}; \ldots; b_{k,1} \geq b_{k, 2} \geq \ldots \geq b_{k, m_k}), (p_1, p_2, \ldots, p_k)\right)\] where
\begin{itemize}
\item[($1^{\circ}$)] for $i <k$ each partition $b_{i,1} \geq \ldots \geq b_{i, m_i} \geq 0$  each part has size $b_{i,j} \leq m_{i+1} + m_{i+2} + \cdots + m_{k}$, $1 \leq j \leq m_{i}$, while $b_{k,j} =0$ for $1 \leq j \leq m_{k}$,

\item[($2^{\circ}$)] $p_{i} =0 $ if $|B_{i}| =1$ and $1 \leq p_{i} \leq m_{i}$ if $|B_{i}| =2$.
\end{itemize} 

$\bcode w$ is computed as follows.

\begin{itemize}
\item[(1)] Set $j=1$.
\item[(2)] If $B_j = \{y_{1}, y_{2}\}$ has two integers $y_2 >y_1$  then let  $p_{j}=i$ be the position of $y_{2}$ in the subword of $w$  formed by the elements of $B_j$. Otherwise set $p_{j}=0$. 
\item[(3)] Sort the elements of the block $B_{j}$ and form the partition $b_{j,1} \geq \ldots \geq b_{j, m_j} \geq 0$ from the contributions to $\sor{w}$ (listed in  nonincreasing order) by the elements of $B_{j}$. Keep calling the partially sorted word $w$.
\item[(4)] If $j<k$ increase $j$ by 1 and go to step $(2)$. Otherwise stop. 
 \end{itemize}

Consider, for example, the relation $U = \{(5,3),(5,2),(5,1),(4,3),(4,2),(4,1),(3,2),(3,1)\}$ which is bipartitional with blocks $B_1 = \{5,4\}, B_2 = \{3\}, B_3 = \{2,1\}$ and $\beta_1 = \beta_2 = \beta_3 =0$.  Let $w = 42345411 \in \R(2,1,1,3,1)$.  Snce the subword formed by the 4's and the 5 is $4454$, we have $p_{1} = 3$. The steps for sorting the 4's and the 5 are
$$4234541 \overset{+1} \rightarrow 42341415 \overset{+1} \rightarrow 42341145 \overset{+2} \rightarrow 42311445 \overset{+4} \rightarrow 12314445$$ and, therefore, 
the first partition in $\bcode{w}$ is $4 \geq 2 \geq 1 \geq 1$. Then $p_{2} = 0$ and sorting the 3 yields $12134445$, therefore the second partition  is 1. Finally, $p_{3}=2$ and 
$$\bcode{w} = ((4 \geq 2 \geq 1 \geq 1; 1; 0 \geq 0 \geq 0),(3, 0, 2)).$$

Since the parts of the partitions in the $\bcode$ represent contributions to the sorting index, the bound for their size $b_{i,j} \leq m_{i+1} + m_{i+2} + \cdots + m_{k}$ easily follows. Therefore, the $\bcode$ is clearly a map from $\R(\al)$ to the set of pairs of sequences of partitions and integers which satisfy  $(1^{\circ})$ and $(2^{\circ})$, which we claim is a bijection. For describing the inverse, the crucial observation is that for blocks of size 2, $B_{j} = \{y_{1} < y_{2}\}$, the contribution to the sorting index is given by $b_{j, p_{j}}$. Then given $$\left((b_{1,1} \geq  b_{1,2} \geq \ldots \geq b_{1, m_1}; b_{2,1} \geq b_{2,2} \geq \dots \geq b_{2, m_2}; \ldots; b_{k,1} \geq b_{k, 2} \geq \ldots \geq b_{k, m_k}), (p_1, p_2, \ldots, p_k)\right)$$ which satisfies $(1^{\circ})$ and $(2^{\circ})$, the corresponding word $w \in \R(\al)$ is constructed as follows.

\begin{itemize}
\item[(1)] Let $j=k$ and $w$ be the empty word.
\item[(2)] Add to the end of $w$ the elements of $B_{j}$ with their multiplicities, listed in nondecreasing order $x_{j,1}x_{j,2} \cdots x_{j, m_{j}}$. 
\item[(3)] If $|B_{j}| =1$, then for $i =1, \ldots, m_{j}$, swap $x_{j,i}$ with the element of $w$ which is $b_{j,i}$ places to the left of $x_{j,i}$.
\item[(4)] If $B_{j} = \{y_{1}< y_{2}\}$, then let $b_{j,1}^{'} \geq \cdots \geq b_{j,m_{j}-1}^{'}$ be the partition obtained from $b_{j,1} \geq \ldots \geq b_{j, m_j}$ by deleting the part $b_{j, p_{j}}$. Then for $i =1, \ldots, m_{j}-1$, swap $x_{j,i}$ with the element of $w$ which is $b_{j,i}^{'}$ places to the left of $x_{j,i}$. Finally, swap $x_{j,m_{j}} = y_{2}$ with the element  in $w$ which is $b_{j, p_{j}} + m_{j} - p_{j}$ positions to its left. (After this step there are $b_{j, p_{j}}$ elements from $B_{j+1}, \ldots, B_{k}$ and $m_{j} - p_{j}$ elements from $B_{j}$ to the right of $y_{2}$.) 
\item[(5)] If $j>1$ decrease $j$ by 1 and go to step $(2)$. Otherwise stop. 
\end{itemize}

The $\bcode$ is designed so that $\sorup{w} = \sum_{i=1}^{k} \sum_{j=1}^{m_{i}} b_{i,j}$. The bijection described above then yields  the generating function for $\sorup$. Let $p(j, k, n)$ denote the number of partitions of $n$ into at most $k$ parts, with largest part at most $j$. It is known that $\sum_{n\geq0} p(j,k,n)q^{n} = {j+k \brack j}$. The block $B_j$ contributes  $$ {m_j \choose \alpha(B_j)} \sum_{n\geq 0} p(m_{j+1}+ m_{j+2}\cdots+m_{n},m_j,n)q^n = {m_j \choose \alpha(B_j)} {m_{j}+ m_{j+1}\cdots+m_{n}\brack m_{j}}$$
to $\sum_{w \in \R(\al)} q^{\sorup w}$, where the leading binomial coefficient counts the number of possible values of $p_{j}$. Thus we have 
$$\sum_{w \in \R(\al)} q^{\sorup w} = \prod _{j=1}^k {m_j \choose \alpha(B_j)} {m_{j}+ m_{j+1}\cdots+m_{n} \brack m_{j}} =
{|\alpha| \brack m_1,\ldots, m_k}\prod_{j=1}^k {m_j \choose \alpha(B_j)}.$$

\end{proof}
In particular, we get the generating function for the standard sorting index for words.
\begin{corollary}
\[\sum_{w \in \R(\al)} q^{\sor w} = {|\alpha| \brack m_1,\ldots, m_k}.\]
\end{corollary}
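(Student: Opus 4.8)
The plan is to obtain the Corollary as the special case of Proposition~\ref{lem: gen function} in which $U$ is taken to be the natural integer order $>$ on $X = \{1,2,\ldots,n\}$, i.e.\ $(x,y) \in U$ if and only if $x > y$.

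First I would check that this $U$ satisfies the four hypotheses of Theorem~\ref{thm: main sor}. The relation $>$ is bipartitional, with ordered bipartition given by the singleton blocks $B_1 = \{n\},\ B_2 = \{n-1\},\ \ldots,\ B_n = \{1\}$ and all $\beta_i = 0$; hence it has no underlined blocks, every block has size $1$ (so in particular all but the last block have size at most $2$, and conditions (3) and (4) of Theorem~\ref{thm: main sor} hold vacuously), and $(x,y) \in {>}$ forces $x > y$. Thus Proposition~\ref{lem: gen function} applies to $U = {>}$.

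Next I would invoke the observation recorded just after the definition of $\sorup$ that for the natural order the graphical sorting index coincides with the ordinary sorting index on words, $\sorup w = \sor w$ for every $w \in \R(\al)$. Substituting $k = n$ and $m_1 = m_2 = \cdots = m_n = 1$ into the formula of Proposition~\ref{lem: gen function}, each factor $\binom{m_j}{\alpha(B_j)}$ is the $q$-multinomial coefficient associated to a single-letter multiset, which is $1$ (there is a unique arrangement of the $\alpha_{x}$ copies of a letter $x$); hence $\prod_{j=1}^{k}\binom{m_j}{\alpha(B_j)} = 1$ and the right-hand side collapses to ${|\alpha| \brack m_1,\ldots,m_k}$, giving the claimed identity.

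There is essentially no obstacle: the entire content is the verification that the integer order fits the framework of Proposition~\ref{lem: gen function} together with the trivial bookkeeping that singleton blocks contribute unit factors; everything substantive was already carried out in the proof of Proposition~\ref{lem: gen function}.
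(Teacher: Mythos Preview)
Your approach is exactly the paper's: the Corollary is just Proposition~\ref{lem: gen function} specialized to $U = {>}$, with singleton blocks $B_j=\{n+1-j\}$ and $\sorup = \sor$. One bookkeeping slip to fix: despite the line ``$m_i = |B_i|$'' in the preliminaries, the proof of Proposition~\ref{lem: gen function} makes clear that $m_i$ actually stands for $\sum_{x\in B_i}\alpha_x$ (otherwise neither the $q$-multinomial ${|\al|\brack m_1,\ldots,m_k}$ nor the $\bcode$ indexing would make sense), so with singleton blocks you get $m_j = \alpha_{n+1-j}$, not $m_j = 1$; with that correction your computation $\binom{m_j}{\alpha(B_j)} = \binom{\alpha_{n+1-j}}{\alpha_{n+1-j}} = 1$ (an ordinary multinomial, not a $q$-one) goes through and the formula collapses to ${|\al|\brack \alpha_1,\ldots,\alpha_n}$ as claimed.
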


Finally, we prove the ``only if'' part of Theorem~\ref{thm: main sor} via the following few lemmas. 

\begin{lemma} \label{lem: natural rels}
If $\sorup$, $\majup$, and $\invup$ are equidistributed over a fixed rearrangement class $\R(\al)$ then the relation $U$ must be a subset of the integer order modulo relations $(x,x)$.
\end{lemma}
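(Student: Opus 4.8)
The plan is to show the contrapositive: if $U$ is not a subset of the integer order (even after deleting loops $(x,x)$), then $\sorup$ cannot be equidistributed with $\majup$ and $\invup$ on $\R(\al)$. Since Theorem~\ref{thm: main inv} already forces $U$ to be essentially bipartitional relative to $\al$ whenever $\majup$ and $\invup$ are equidistributed, I may assume from the outset that $U$ is essentially bipartitional, with ordered bipartition $((B_1,\ldots,B_k),(\beta_1,\ldots,\beta_k))$ (after adjusting loops on singleton-multiplicity letters). So the real content is: among essentially bipartitional relations, the ones for which $\sorup$ also matches are exactly those where every edge $(x,y)\in U$ satisfies $x>y$ — equivalently, no block $B_i$ contains two distinct integers $x<y$ that are ``mis-ordered'' in the sense that $(x,y)\in U$, and more importantly no edge goes from a smaller letter to a larger letter across blocks.

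First I would pin down the key numeric invariant: the maximum of $\sorup$ over $\R(\al)$ versus the maximum of $\invup$. From the sorting-algorithm definition, when we process the largest remaining letter and slide it to the right past $i-j$ positions, it can contribute at most $i-j$ to $\sorup$, and this telescopes to at most $\binom{|\al|}{2}$ overall; more precisely, tracking multiplicities, $\max_{w}\sorup w \le \max_w \invup w$ always, with a clean combinatorial reason (each inversion counted by $\sorup$ during the sort is an inversion of the final-vs-current arrangement, and distinct such events correspond to distinct $U$-related pairs). Conversely, by Lemma~\ref{lem: inv < maj} and equidistribution, $\max_w \majup w = \max_w \invup w$. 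So equidistribution of all three forces $\max_w \sorup w = \max_w \invup w$, and then — since $\sorup$ and $\invup$ have the same maximum and the same distribution — one can extract structural constraints by examining which words achieve the maximum and comparing coefficients of the top-degree terms of the generating functions. The cleanest route is probably to directly exhibit, whenever $U$ has an edge $(x,y)$ with $x<y$, a small sub-rearrangement-class (taking $\al$ supported on $\{x,y\}$ together with whatever letters are forced to be present by the ``no isolated vertex'' lemma) on which $\sorup$ produces a strictly smaller maximum, or a different number of words at a given value, than $\invup$ does — because in the sort, $y$ is processed before $x$, so the edge $(x,y)$ can never be ``activated'' by $x$ jumping over $y$, whereas $\invup$ does count the pair $(x,y)$ in any word where $x$ precedes $y$.

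The concrete mechanism I expect to use: fix $x<y$ with $(x,y)\in U$. In computing $\sorup w$, the letter $y$ is always relocated to the right of $x$ before $x$ is ever moved (we process letters in decreasing integer order, largest first), so after $y$ is processed, $x$ sits to the left of that copy of $y$ and the pair never again contributes — $x$ only jumps over copies of letters $\le x < y$. Hence the pair $\{x,y\}$ contributes $0$ to $\sorup$ in every word, but contributes $1$ to $\invup$ in any word where $x$ appears before a given $y$. Counting: the number of words in $\R(\al)$ with $\invup w = 0$ would then be strictly larger than the number with $\sorup w=0$ unless there is no such pair — and since $\invup$ and $\sorup$ are assumed equidistributed, the coefficient of $q^0$ must agree, giving the contradiction. (One must be slightly careful that $\invup w = 0$ is achievable and that the loop-letters don't interfere; this is where essential bipartitionality and the earlier lemmas guaranteeing no element is ``isolated'' get used to reduce to a clean situation.)

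The main obstacle will be handling the interaction of loops $(x,x)\in U$ with the sorting algorithm when $\alpha_x>1$: a loop block (underlined block) contributes $q^{\beta_i\binom{m_i}{2}}$ to $\invup$ and $\majup$ via~\eqref{fzgf}, but the sort processes equal letters without ever counting a jump of $x$ over $x$ (the copies of $x$ are moved as a group preserving relative order and only counted against strictly-related letters to their right), so $\sorup$ is insensitive to loops in the same way it is insensitive to upward edges. Thus the argument above also shows underlined blocks of size $\ge 2$ are forbidden — which is consistent with condition~(1) of Theorem~\ref{thm: main sor} — but I need to make sure the $q^0$-coefficient comparison (or a top-coefficient comparison) cleanly detects this, rather than getting tangled in the bijective $\bcode$ machinery of Proposition~\ref{lem: gen function}, which is really meant for the converse direction. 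Once the "no upward edges and no nontrivial loops" conclusion is in hand, it says exactly that $U$, with trivial loops removed, is a subset of the strict integer order, which is the assertion of the lemma.
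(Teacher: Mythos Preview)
Your core mechanism is exactly the paper's: once Theorem~\ref{thm: main inv} gives essential bipartitionality, you observe that in the sorting algorithm a letter only ever jumps over strictly smaller letters, so neither an upward edge $(x,y)$ with $x<y$ nor a loop $(x,x)$ can contribute to $\sorup$, and then you compare extreme coefficients of the two generating functions.

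Two things to fix. First, the $q^0$ inequality is stated backwards. An edge $(x,y)$ with $x<y$ imposes a constraint on words with $\invup w=0$ (it forces that copy of $y$ to precede that copy of $x$) while imposing \emph{no} constraint on words with $\sorup w=0$; so one gets $\#\{\sorup=0\}\ge\#\{\invup=0\}$, not the reverse. Second, the max comparison by itself does \emph{not} kill all underlined blocks: if $x>y$ with $(x,y),(y,x)\in U$ and $\alpha_x=\alpha_y=1$, the downward edge $(x,y)$ can still be realized during sorting, so $\max\sorup=\max\invup$ on the nose and the maxima do not separate the distributions. The paper therefore splits into two cases. For underlined blocks it uses the minimum: if $(x,y),(y,x)\in U'$ (distinct letters, or a genuine loop with $\alpha_x>1$), then every word has $\invup\ge1$, whereas the nondecreasing word has $\sorup=0$, a contradiction at the $q^0$ coefficient. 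Only after underlined blocks are excluded does it run the max argument: a word listing $B_1$, then $B_2$, etc., achieves $\invup$ equal to the number of edges of $(\al,U')$, while any upward edge is never activated during sorting and each remaining edge contributes at most once, so $\max\sorup$ is strictly smaller. Your proposal contains both halves of this, just intertwined and with the sign slip; separating them as the paper does makes each step immediate.

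One more point: restricting to a ``small sub-rearrangement-class supported on $\{x,y\}$'' is not available --- the hypothesis is equidistribution on the fixed $\R(\al)$, and nothing lets you pass to a smaller $\al$. The contradictions above are obtained directly on $\R(\al)$.
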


\begin{proof}
Suppose $\sorup$, $\majup$, and $\invup$ are equidistributed on $\R(\al)$. By Theorem \ref{thm: main inv},  $U$ must be essentially bipartitional relative to $\alpha$. That means that there are subsets $I, J  \subset \{x \colon \alpha_{x =1}\}$  such that $U' = (U \setminus \{(x,x) \colon x \in I\}) \cup  \{(x,x) \colon x \in J\}$ is bipartitional. Without loss of generality we may assume that $I, J$ are chosen so that $U'$ does not have underlined blocks $\{x\}$ of size 1 such that $\alpha_{x}=1$. We claim that $U'$ is a subset of the natural order. 

First we will show that there are no underlined blocks  in $U'$.  Suppose the contrary. Then there exist elements $x$ and $y$ such that $(x,y), (y,x) \in U'$ ($x \neq y$ or $y$ is a second  copy of the same element with $\alpha_{x }>1$).   Because we have both $(x,y)$ and $(y,x)$ in $U'$ every word $w \in \R(\al)$  has at least one $U'$-inversion.  Therefore the minimum $\invup$ over the rearrangement class $\R(\al)$ is 1.  On the other hand, $\sorup{11\cdots122\cdots2\cdots nn\cdots n} =0$.  This is a contradiction, and thus there are no underlined blocks in $U'$.

Now assume that $U'$ is not a subset of the natural integer order.  Then there exist at least two elements such that $(x,y) \in U'$, but $y>x$ with respect to the natural order.  Let $B_1, B_2, \ldots, B_k$ be the blocks of $U'$.  Now consider the words created by placing the elements of $B_1$ in some order followed by the elements of $B_2$ placed to the right of $B_1$ and continue the process until the elements of $B_k$ in some order are the last elements of the word.  The words of this type will have $\invup$ equal to the number of edges in the graph $(\al, U')$ as defined in the proof of Lemma~\ref{lem: inv < maj}.  Therefore, the maximum $\invup$ is bounded below by the number of edges in $(\al, U')$ (it is in fact equal to the number of edges in $(\al, U')$). In the sorting algorithm, however, elements are only sorted over elements that are smaller than them with respect to the natural order. Therefore $x$ will never jump over $y$, and thus the relation $(x,y)$ will never contribute to the sorting index.  Since each edge of the graph $(\al, U')$ contributes at most 1 to $\sorup$, we conclude that the maximum $\sorup$ on $\R(\al)$ is less than the maximum $\invup$.  This is a contradiction, and $U'$ must be a subset of the natural order.
\end{proof}

The next inequality will be used to prove the remaining part of Theorem~\ref{thm: main sor}.

\begin{lemma} \label{lem: ineq}
For $a, b \in \mathbb{Z}_{\geq 1}$
\[ \sum_{i=0}^{\min\{a,b\}} \binom{a}{i} \leq \binom{a + b}{b}\] and equality holds if and only if $b=1$.
\end{lemma}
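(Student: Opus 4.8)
The plan is to prove the inequality $\sum_{i=0}^{\min\{a,b\}} \binom{a}{i} \leq \binom{a+b}{b}$ by an injective combinatorial argument, then analyze the equality case separately. First I would interpret $\binom{a+b}{b}$ as the number of lattice paths from $(0,0)$ to $(a,b)$ using unit right and up steps, or equivalently the number of $b$-subsets of an $(a+b)$-element set. The left-hand side counts pairs $(i, S)$ where $0 \le i \le \min\{a,b\}$ and $S$ is an $i$-subset of a fixed $a$-element set; so it suffices to exhibit an injection from such pairs into the $b$-subsets of an $(a+b)$-element ground set. A clean way to do this: write the ground set as $A \sqcup B$ with $|A| = a$, $|B| = b$, fix a linear order on $B = \{\beta_1 < \beta_2 < \cdots < \beta_b\}$, and map the pair $(i, S)$ with $S \subseteq A$, $|S| = i$, to the $b$-subset $S \cup \{\beta_{i+1}, \beta_{i+2}, \ldots, \beta_b\}$ of $A \sqcup B$. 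This set has size $i + (b - i) = b$, and it is valid since $i \le \min\{a,b\} \le b$. The map is injective because from the image $T$ we recover $i = |T \cap A|$ and $S = T \cap A$. This establishes the inequality.

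For the equality case, I would argue in both directions. If $b = 1$, the left side is $\sum_{i=0}^{\min\{a,1\}}\binom{a}{i} = \binom{a}{0} + \binom{a}{1} = 1 + a = \binom{a+1}{1}$, which is exactly the right side (here $\min\{a,1\} = 1$ since $a \ge 1$). Conversely, suppose $b \ge 2$; I would show the injection above is not surjective, equivalently that some $b$-subset $T$ of $A \sqcup B$ is not in the image. The image consists precisely of those $T$ for which $T \cap B$ is an \emph{upper segment} $\{\beta_{j+1}, \ldots, \beta_b\}$ of $B$ (with $j = |T \cap A| \le \min\{a,b\}$). When $b \ge 2$ we can easily violate this: since $a \ge 1$ and $b \ge 2$, take $T$ to consist of one element of $A$ together with $\beta_1$ and $b - 2$ other elements of $B$ chosen to avoid forming an upper segment --- for instance $T = \{\alpha_1, \beta_1, \beta_3, \beta_4, \ldots, \beta_b\}$ (omitting $\beta_2$), which has $|T \cap A| = 1$ but $T \cap B = \{\beta_1, \beta_3, \ldots, \beta_b\} \ne \{\beta_2, \ldots, \beta_b\}$, so $T$ is not in the image. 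Hence the inequality is strict when $b \ge 2$.

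Alternatively, and perhaps more transparently for the equality analysis, I would give a direct computation: $\binom{a+b}{b} - \sum_{i=0}^{\min\{a,b\}}\binom{a}{i}$ can be shown nonnegative and zero iff $b=1$ by induction on $b$ using the Pascal-type identity $\binom{a+b}{b} = \sum_{i=0}^{b}\binom{a+b-1-i}{b-i}\cdot[\text{something}]$ --- but the cleanest is the Vandermonde-style identity $\binom{a+b}{b} = \sum_{i=0}^{b}\binom{a}{i}\binom{b}{b-i}$. Since $\binom{b}{b-i} \ge 1$ for $0 \le i \le b$ and, when $b \ge 2$, the term $i = b$ (if $b \le a$) or more robustly a term with $\binom{b}{b-i} \ge 2$ appears, the sum $\sum_{i=0}^{b}\binom{a}{i}\binom{b}{b-i}$ strictly exceeds $\sum_{i=0}^{\min\{a,b\}}\binom{a}{i}$ unless $b = 1$; when $b=1$ the Vandermonde sum is $\binom{a}{0}\binom{1}{1} + \binom{a}{1}\binom{1}{0} = 1 + a$, matching exactly. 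I expect the main obstacle to be nothing deep --- it is just a matter of choosing the presentation (injective map versus Vandermonde) that makes the equality characterization cleanest and handles the boundary interaction between $\min\{a,b\}$ and the ranges of summation without fuss; the Vandermonde identity route handles this most economically, so that is the one I would write up.
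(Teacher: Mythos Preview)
Your proposal is correct. The Vandermonde route you recommend writing up is exactly what the paper does: it splits on whether $a\le b$ or $a>b$, bounds $\sum_{i}\binom{a}{i}$ by $\sum_{i}\binom{a}{i}\binom{b}{b-i}=\binom{a+b}{b}$ (or the symmetric variant), and reads off the equality case from the requirement that every $\binom{b}{b-i}$ appearing with a nonzero $\binom{a}{i}$ equal $1$, which forces $b=1$.

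Your injective argument --- mapping $(i,S)$ with $S\subseteq A$, $|S|=i$, to $S\cup\{\beta_{i+1},\ldots,\beta_b\}$ --- is a genuinely different proof not in the paper. It is a nice bijective packaging of the same inequality: the image is exactly the set of $b$-subsets $T$ of $A\sqcup B$ for which $T\cap B$ is the top segment of length $b-|T\cap A|$, and your explicit non-image set when $b\ge 2$ cleanly handles the strictness. The trade-off is that the Vandermonde line is shorter and makes the equality characterization a one-line check, whereas the injection gives a more combinatorial picture but requires the extra step of exhibiting a set outside the image. Either would be a complete proof; since you plan to write up the Vandermonde version, you will match the paper.
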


\begin{proof}
If $a \leq b$ then using the Vandermonde's Identity we have
\[\sum_{i=0}^{\min\{a,b\}} \binom{a}{i} = \sum_{i=0}^{a} \binom{a}{i} \leq  \sum_{i=0}^{a} \binom{a}{i} \binom{b} {a-i} = \binom{a+b}{b}\] and equality holds if and only if $a=b=1$.
Similarly, if $a > b$ then 
\[\sum_{i=0}^{\min\{a,b\}} \binom{a}{i} = \sum_{i=0}^{b} \binom{a}{i} \leq \sum_{i=0}^{b} \binom{a}{i} \binom{b} {b-i} = \binom{a+b}{b}\]
\end{proof}

\begin{lemma} \label{lem: max block}
Suppose $U$ is a bipartitional relation with blocks $B_{1}, \ldots, B_{k}$, none of which are underlined, such that  $\sorup$, $\majup$, and $\invup$ are equidistributed over $\R(\al)$. Then for every $1 \leq i < k$, $|B_{i}| \leq 2$ and if the equality $|B_{i}| =2$ holds then $\alpha_{\max B_{i}}=1$. 
\end{lemma}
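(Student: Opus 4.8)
The plan is to argue by contrapositive via generating functions: if some non-final block $B_i$ violates the stated size/multiplicity condition, then the maximum value of $\sorup$ on $\R(\al)$ is strictly smaller than the maximum value of $\invup$, contradicting equidistribution. Since $U$ is bipartitional with no underlined blocks and (by Lemma~\ref{lem: natural rels}) a subset of the natural order, $\invup$ and $\majup$ already have the product generating function~\eqref{fzgf} with all $\beta_j=0$; so $\max_{w} \invup w = \sum_{j} \binom{m_j}{2}$, attained by any word listing $B_1$, then $B_2$, etc., each block internally in decreasing order. For $\sorup$ the key structural fact is the one isolated in the proof of Proposition~\ref{lem: gen function}: the contribution of block $B_j$ to $\sorup w$ is the sum of a partition with at most $m_j$ parts, each part bounded by $m_{j+1}+\cdots+m_k$, and moreover when $|B_j|=2$ one of those (at most $m_j=2$) parts is forced to be the ``$p_j$-th'' part, which the $\bcode$ bijection shows ranges only over $\{1,\ldots,m_j\}$ rather than contributing freely. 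I would first record the exact maximum of $\sorup$ restricted to each block: it equals $\min\{m_j, m_{j+1}+\cdots+m_k\}\cdot(m_{j+1}+\cdots+m_k)$ in general, but when $|B_j|=2$ and $\alpha_{\max B_j}\ge 2$ a parity/position constraint on $p_j$ forces a strictly smaller bound.

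Concretely, I would proceed as follows. First, reduce to the ``local'' statement: by the product form of the $\sorup$ generating function from Proposition~\ref{lem: gen function} (which only requires bipartitionality with no underlined blocks, not yet the size condition — so I should check that the $\bcode$ construction still makes sense and gives an injective map whose image is controlled in this generality, or else argue directly), $\max_w \sorup w = \sum_j M_j^{\sor}$ where $M_j^{\sor}$ is the per-block maximum, and likewise $\max_w \invup w = \sum_j \binom{m_j}{2}$. So it suffices to show $M_j^{\sor} \le \binom{m_j}{2}$ for all $j$ with equality possible only under the two conditions (size $\le 2$ for $i<k$, and $\alpha_{\max B_i}=1$ when the size is $2$). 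Second, compute $M_j^{\sor}$: each of the $m_j$ elements of $B_j$ can jump over at most $m_{j+1}+\cdots+m_k =: t_j$ elements of later blocks (it never jumps over elements of $B_j$ or of earlier blocks, since $U$ is a subset of the natural order and earlier blocks are larger), and these jumps can be realized independently, giving the naive bound $m_j t_j$; but for equidistribution we actually need to compare against $\binom{m_j}{2}$, which is $< m_j t_j$ unless $t_j$ is small. The right comparison is the one packaged in Lemma~\ref{lem: ineq}: the number of distinct $\sorup$-values contributed by block $B_j$ is $\sum_{i=0}^{\min\{m_j, t_j\}}\binom{\text{(something)}}{i}$-type count, bounded by $\binom{m_j+t_j}{m_j}$-type count coming from $\invup$, with equality only when the relevant parameter is $1$. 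So I would set up the per-block \emph{polynomial} identity/inequality: the block's contribution to the $\sorup$ generating function is ${m_j \choose \alpha(B_j)}{t_j + m_j \brack m_j}$ when $|B_j|\le 2$ with the multiplicity condition, but is a polynomial of strictly smaller degree or with strictly fewer terms otherwise, whereas the block's contribution to the $\invup$ generating function is always ${m_j \choose \alpha(B_j)} q^{\binom{m_j}{2}}\cdot(\text{rearrangement factor})$ — and matching the \emph{top degrees} across all blocks forces each $M_j^{\sor}=\binom{m_j}{2}$, which via Lemma~\ref{lem: ineq} (the ``equality iff $b=1$'' clause) forces $t_j \le$ the threshold, i.e. $m_{i+1}+\cdots+m_k$ plays the role making equality fail unless $|B_i|\le 2$; and within a size-$2$ block, tracking where the larger letter $\max B_i$ sits (the $p_j$ statistic) shows that if $\alpha_{\max B_i}\ge 2$ the two copies cannot both achieve the maximal jump, dropping the degree by $1$.

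The main obstacle — and the step I'd spend the most care on — is the size-$2$ block with $\alpha_{\max B_i}\ge 2$ case: I must show the \emph{maximum} of $\sorup$ over such a block is $\binom{m_i}{2}-1$ (or at least strictly less than $\binom{m_i}{2}$ relative to what $\invup$ demands), and this requires understanding the $\bcode$ constraint on $p_i$ precisely. In the $\bcode$, when $B_i=\{y_1<y_2\}$ the part $b_{i,p_i}$ is tied to the position $p_i$ of $y_2$ among the letters of $B_i$, and $1\le p_i\le m_i$; when $\alpha_{y_2}\ge 2$ there are at least two copies of $y_2$ that get sorted but they interact (the second copy of $y_2$ sits to the right of the first during sorting, so it jumps over fewer later-block elements by exactly the number of $B_i$-elements between them). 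Quantifying this loss and checking it is exactly $1$ in the tight configuration is the delicate bookkeeping; everything else (the natural-order reduction, the additivity of per-block contributions, and the final degree-matching argument across blocks closed by Lemma~\ref{lem: ineq}) is routine. Once the per-block inequality with its equality characterization is in hand, the lemma follows by comparing the leading terms of $\prod_j(\text{block contribution})$ for $\sorup$ and for $\invup$ and invoking that equality in a product of polynomials with nonnegative coefficients forces equality factor-by-factor.
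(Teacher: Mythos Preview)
Your central strategy --- compare the \emph{top degree} (i.e., the maximum value) of $\sorup$ with that of $\invup$ block by block --- does not work, and the miscomputation of $\max_w \invup w$ is a symptom of this. Since no block is underlined, elements in the same block create no $U$-inversions, so $\max_w \invup w = \sum_{i<j} m_i m_j$, not $\sum_j \binom{m_j}{2}$. On the $\sorup$ side, each element of $B_j$ can be made to jump over all $t_j := m_{j+1}+\cdots+m_k$ later-block letters, giving per-block maximum $m_j t_j$; summing gives $\sum_j m_j t_j = \sum_{i<j} m_i m_j$ again. Thus $\max_w \sorup w = \max_w \invup w$ \emph{always}, even when a non-final block has size $3$ or more. (Concretely: with $B_1=\{2,3,4\}$, $B_2=\{1\}$, $\al=(1,1,1,1)$, both maxima equal $3$, yet the two distributions differ.) Consequently neither ``strictly smaller degree'' nor ``strictly fewer terms'' can hold, your claimed inequality $M_j^{\sor}\le \binom{m_j}{2}$ is false in general, and the degree-matching argument collapses. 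Your invocation of Lemma~\ref{lem: ineq} as a bound on the number of \emph{distinct $\sorup$-values} is also off target: that lemma is about counting certain words, not values, and the set of attainable values is the full interval in both statistics.

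What the paper actually compares is the \emph{leading coefficient}: the number $\maxs(B_1,\ldots,B_k)$ of words achieving the maximum of $\sorup$ versus the number $\maxi(B_1,\ldots,B_k)=\prod_i\binom{m_i}{\al(B_i)}$ achieving the maximum of $\invup$. The argument is not via a product formula for $\sorup$ (which, as you yourself flag, is only proved under the hypotheses being established), but via the surjection $w\mapsto w'$ ``sort the copies of $n$'': one bounds the number of $\sorup$-maximizing preimages of a fixed $w'$ by $\sum_{i=0}^{\min\{|u|,\alpha_n\}}\binom{|u|}{i}$ where $u$ is the maximal $B_1$-prefix of $w'$, and \emph{this} is where Lemma~\ref{lem: ineq} enters, forcing $\alpha_n=1$ (and then inductively $\alpha_{n-1}=\cdots=\alpha_{s+1}=1$). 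A second explicit-preimage count then pins down $|B_1|\le 2$. So the missing idea in your plan is to look one coefficient deeper than the top degree, and to argue by peeling off the largest letter rather than by assuming a block-product decomposition of the $\sorup$ generating function.
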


\begin{proof} By Lemma~\ref{lem: natural rels}, the blocks $B_{1}, \ldots, B_{k}$ are consecutive intervals with $n \in B_{1}$ and $1 \in B_{k}$.	If $k=1$ there is nothing to prove, so suppose $k>1$.

Let $\maxi(B_1,\ldots B_k)$ and $\maxs(B_1, \ldots , B_k)$ denote the number of words in $\R(\al)$ that maximize $\invup$ and $\sorup$, respectively. Let $B_{1} = \{s, s+1, \ldots, n\}$, $s \leq n-1$. The words in $\R(\al)$ that maximize $\invup$ are exactly those formed by a permutation of the elements of $B_{1}$ (with their multiplicities) followed by a permutation of the elements from $B_{2}$, etc.  So, $\maxi(B_1,\ldots, B_k) = \prod_{i=1}^{k} \binom{m_{i}}{\alpha(B_{i})}$. 

On the other hand, if $w \in \R(\al)$ maximizes $\sorup$ then after sorting the $n$'s, one obtains a word $w' \in \R(\al')$ that maximizes $\sorup$ for $\al' = (\alpha_{1}, \ldots, \alpha_{n-1})$. The map $w \to w'$ is not one-to-one. One can write $w' = u v$ where $u$ is the longest prefix of $w'$ formed by elements of $B_{1}$. Then the number of words $w$ that yield $w'$ is at most $\sum_{i=0}^{\min\{|u|, \alpha_{n}\}} \binom{|u|}{i}$.  Namely, such a $w$ can be obtained by appending the $\alpha_{n}$ copies of $n$ to $w'$ and then swapping the leftmost $i$ copies of $n$ with $i$ letters from $u$ and the remaining  $\alpha_{n} - i$ copies of $n$ with the first $\alpha_{n} - i$ letters of $v$.

Since, by Lemma~\ref{lem: ineq}, \[\sum_{i=0}^{\min\{|u|, \alpha_{n}\}} \binom{|u|}{i} \leq \binom{|u| + \alpha_{n}}{\alpha_{n}} \leq \binom{\alpha_{n}+ \alpha_{n-1}+ \cdots + \alpha_{s}}{\alpha_{n}}\]
with equality when $\alpha_{n}=1$, we have
\[ \maxs(B_1, \ldots , B_k) \leq \binom{\alpha_{n}+ \alpha_{n-1}+ \cdots + \alpha_{s}}{\alpha_{n}} \maxs(B_1 \setminus \{n\}, \ldots , B_k),\] where $\maxs(B_1 \setminus \{n\}, \ldots , B_k)$ is the number of words in $\R(\al')$ that maximize $\sorup$. So, inductively, we get

\[ \maxs(B_1, \ldots , B_k) \leq \binom{\alpha_{n}+ \alpha_{n-1}+ \cdots + \alpha_{s}}{\alpha_{s}, \ldots, \alpha_{n-1}, \alpha_{n}} \maxs(B_2, \ldots , B_k) \leq \prod_{i=1}^{k} \binom{m_{i}}{\alpha(B_{i})} = \maxi(B_1,\ldots, B_k).\] Since we have equalities everywhere,  $\alpha_{n}=1$. We also get that $\maxs(B_1 \setminus \{n\}, \ldots , B_k) = \maxi(B_1 \setminus \{n\}, \ldots , B_k)$ and by the same argument, $\alpha_{n} = \alpha_{n-1} = \cdots =\alpha_{s+1}=1$. 

Now consider a permutation $p$ of the multiset $\{1^{\alpha_{1}}, 2^{\alpha_{2}}, \ldots, s-1^{\alpha_{s-1}}\}$ which maximizes $\sorup$. By appending $\alpha_{s}$ copies of $s$ to $p$ and then swapping them with the first $\alpha_{s}$ letters of $p$ we get the word \[\underbrace{ss\cdots s}_{\alpha_{s}} p'.\] One can readily see that the word \[w' = (n-1)\underbrace{ss\cdots s}_{\alpha_{s}-1} p' s (s+1) (s+2) \cdots (n-2) \in \R(\al')\] maximizes $\sorup$ over $\R(\al')$. Also, there are exactly $\alpha_{s} +1$ words $w$ in $\R(\al)$ that maximize $\sorup$ which can be obtained from $w'$, namely, \\ 
\begin{align*}
&n\underbrace{ss\cdots s}_{\alpha_{s}-1} p' s (s+1) (s+2) \cdots (n-2) (n-1), \\
&(n-1)n\underbrace{ss\cdots s}_{\alpha_{s}-2} p' s (s+1) (s+2) \cdots (n-2)s, \\
&(n-1)sn\underbrace{ss\cdots s}_{\alpha_{s}-3} p' s (s+1) (s+2) \cdots (n-2)s, \\
 &\ldots \\
&(n-1)\underbrace{ss\cdots s}_{\alpha_{s}-2} n p' s (s+1) (s+2) \cdots (n-2) s, \\
&(n-1)\underbrace{ss\cdots s}_{\alpha_{s}-1} n p'' s (s+1) (s+2) \cdots (n-2) a, 
\end{align*}where $a$ is the first letter of $p'$. However, as we saw above, if $\sorup$ and $\invup$ are equidistributed on $\R(\al)$, each word $w'$ corresponds to exactly $\binom{\alpha_{n}+ \alpha_{n-1}+ \cdots + \alpha_{s}}{\alpha_{n}}$ words $w$. So,  
\[\binom{\alpha_{n}+ \alpha_{n-1}+ \cdots + \alpha_{s}}{\alpha_{n}} = \alpha_{s} +1\] and therefore $s=n-1$.

This proves that either $B_{1} = \{n-1, n\}$ with $\alpha_{n} =1$ or $B_{1} = \{n\}$. Since the block is of this form, reasoning as in the proof of Proposition~\ref{lem: gen function} one can see that 
\[ \sum_{w \in \R(\al)} q^{\sorup w} =   {m_1 \choose \alpha(B_1)} {m_{1}+ m_{2}\cdots+m_{n}\brack m_{j}}  \sum_{w \in \R(\al'')} q^{\sorup w},\] where $\R(\al'')$ is the set of all permutations of the elements of $B_{2}, \ldots, B_{k}$ with the multiplicities given by $\al$. Since 
\[ \sum_{w \in \R(\al)} q^{\invup w} =   {m_1 \choose \alpha(B_1)} {m_{1}+ m_{2}\cdots+m_{n}\brack m_{j}}  \sum_{w \in \R(\al'')} q^{\invup w},\] we conclude that $\sorup$ and $\invup$ are equdistributed on $\R(\al'')$ and inductively, we get that each of the remaining blocks $B_{2}, \ldots, B_{k-1}$ has either size 1 or size 2 with the multiplicity of the largest element being 1.
\end{proof}

This completes the proof of Theorem~\ref{thm: main sor}.

\end{document}